\begin{document}
\title{Gaps between zeros of the derivative of the Riemann $\xi$-function}
\author{H. M. Bui}
\address{Mathematical Institute, University of Oxford, OXFORD, OX1 3LB}
\email{hung.bui@maths.ox.ac.uk}
\thanks{The author is supported by an EPSRC Postdoctoral Fellowship}

\subjclass[2000]{11M26, 11M06}

\begin{abstract}
Assuming the Riemann hypothesis, we investigate the distribution of gaps between the zeros of $\xi'(s)$. We prove that a positive proportion of gaps are less than $0.796$ times the average spacing and, in the other direction, a positive proportion of gaps are greater than $1.18$ times the average spacing. We also exhibit the existence of infinitely many normalized gaps smaller (larger) than $0.7203$ ($1.5$, respectively).
\end{abstract}
\maketitle

\section{Introduction}

The Riemann $\xi$-function is defined by
\begin{equation*}
\xi(s)=\frac{s(s-1)}{2}\pi^{-s/2}\Gamma\big(\frac{s}{2}\big)\zeta(s),
\end{equation*}
where $\Gamma(s)$ is the Euler $\Gamma$-function and $\zeta(s)$ is the Riemann zeta-function. The $\xi$-function is an entire function with order $1$ and has a functional equation
\begin{equation*}
\xi(s)=\xi(1-s).
\end{equation*}
The zeros of $\xi(s)$ are identical to the complex zeros of the Riemann zeta-function. So if the Riemann hypothesis is true, all the zeros of $\xi(s)$, and so are the zeros of $\xi'(s)$, have real part $1/2$. Assuming the Riemann hypothesis, we write the zeros of $\xi'(s)$ as ${\scriptstyle{\frac{1}{2}}}+i\gamma_1$ (throughout the paper, the ordinates of the zeros of $\xi(s)$ will be denoted by $\gamma$, while those of $\xi'(s)$ will be denoted by $\gamma_1$). For $0<\gamma_1\leq\gamma_1'$ two consecutive ordinates of zeros, we define the normalized gap
\begin{equation*}
\delta(\gamma_1)=(\gamma_1'-\gamma_1)\frac{\log\gamma_1}{2\pi}.
\end{equation*}

The number of zeros of $\xi'(s)$ with ordinates in $[0,T]$ is $\frac{1}{2\pi}T\log T+O(T)$, so on average $\delta(\gamma_1)$ is $1$. In this paper, we are interested in the distribution of $\delta(\gamma_1)$. For a thorough discussion of the motivations of the problem, see [\textbf{\ref{FG}}]. It is expected that there exist arbitrarily small and large gaps between the zeros of $\xi'(s)$. That is to say
\begin{equation*}
\liminf_{\gamma_1}\delta(\gamma_1)=0\ \ \textrm{and}\ \limsup_{\gamma_1}\delta(\gamma_1)=\infty,
\end{equation*}
where $\gamma_1$ runs over all the ordinates of the zeros of $\xi'(s)$. We first establish

\newtheorem{theo}{Theorem}\begin{theo}
Assume RH. Then we have
\begin{equation*}
\liminf_{\gamma_1}\delta(\gamma_1)<0.7203\ \ \textrm{and}\ \ \limsup_{\gamma_1}\delta(\gamma_1)>1.5.
\end{equation*}
\end{theo}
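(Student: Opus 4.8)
The plan is to transplant the amplification method of Montgomery and Odlyzko, in the mollified form developed by Conrey, Ghosh and Gonek for the zeros of $\zeta$, to the zeros of $\xi'$. Fix a small $\theta\in(0,\tfrac12)$, put $y=T^{\theta}$ and $L=\log T$, and introduce an \emph{amplifier}
\[
A(s)=\sum_{n\le y}\frac{a_n}{n^{s}},
\]
a Dirichlet polynomial whose coefficients $a_n$ are left free and optimized only at the very end. Everything will hinge on comparing two quantities attached to $A$: the second moment over the zeros of $\xi'$,
\[
S=\sum_{0<\gamma_1\le T}\Big|A\big(\tfrac12+i\gamma_1\big)\Big|^{2},
\]
and the mean value along the critical line,
\[
I=\int_0^T\Big|A\big(\tfrac12+it\big)\Big|^{2}\,dt .
\]
The integral $I$ is elementary, equal to $T\sum_{n\le y}|a_n|^2/n$ up to lower-order terms by the standard mean value theorem for Dirichlet polynomials; the sum $S$ is the real content and is addressed below.

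Since the density of the ordinates $\gamma_1$ near height $T$ is $L/2\pi$, the quotient $\mathcal{R}=2\pi S/(L\,I)$ measures how the mass of $|A|^{2}$ at the zeros of $\xi'$ compares with its average along the line. The nonnegative weights $w(\gamma_1)=|A(\tfrac12+i\gamma_1)|^{2}$ let one read off gaps: writing each normalized gap as $\delta(\gamma_1)=(\gamma_1'-\gamma_1)L/2\pi$ and noting that the $w$-weighted average of the $\delta(\gamma_1)$ lies between their infimum and supremum, a quadrature estimate of Gallagher type — legitimate because $A(\tfrac12+it)$ has all its frequencies $\log n$ below $\theta L$, so $|A|^{2}$ varies slowly across a single gap — identifies this weighted average with $1/\mathcal{R}$, up to an error controlled by $\theta$. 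Consequently
\[
\liminf_{\gamma_1}\delta(\gamma_1)\;\le\;\frac{1}{\mathcal{R}}\;\le\;\limsup_{\gamma_1}\delta(\gamma_1).
\]
Thus making $\mathcal{R}$ as \emph{large} as possible forces some normalized gaps to be short, while making $\mathcal{R}$ as \emph{small} as possible forces some to be long; the two halves of the Theorem are the two extremes of a single optimization. Because only the existence of one short (resp. long) gap is required, no variance or positive-proportion input is needed here, which is why the admissible constants $0.7203$ and $1.5$ can be pushed past those of the abstract's distributional statements.

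The decisive and hardest step is the asymptotic evaluation of $S$. As the $\gamma_1$ are the zeros of $\xi'$ rather than of $\zeta$, there is no Euler product to exploit directly, and I would reach them through $\xi''/\xi'$, whose only singularities in the critical strip are simple poles of residue $1$ at the zeros of $\xi'$. A smoothed contour integral of $\tfrac{\xi''}{\xi'}(s)$ against $A(s)\,\overline{A(1-\bar s)}$ — the analytic continuation of $|A(\tfrac12+it)|^{2}$ — around a suitable rectangle collects $S$. Writing
\[
\frac{\xi''}{\xi'}=\frac{\xi'}{\xi}+\Big(\log\frac{\xi'}{\xi}\Big)'
\]
separates the classical contribution of $\tfrac{\xi'}{\xi}$, which reproduces a mollified second moment over the zeros $\rho=\tfrac12+i\gamma$ of $\xi$ (i.e.\ the zeta zeros, with known asymptotics), from the genuinely new term $(\log\tfrac{\xi'}{\xi})'$. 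Substituting $\tfrac{\xi'}{\xi}(s)=\tfrac{\zeta'}{\zeta}(s)+R(s)$, where $R(s)=\tfrac1s+\tfrac1{s-1}-\tfrac12\log\pi+\tfrac12\tfrac{\Gamma'}{\Gamma}(\tfrac s2)$ has real part $\sim\tfrac12\log\tfrac{t}{2\pi}$ on the line, and expanding $\log(\tfrac{\zeta'}{\zeta}+R)=\log R+\log(1+\tfrac{\zeta'/\zeta}{R})$ in the small ratio $(\zeta'/\zeta)/R$, one is led to mean values of $A(s)\overline{A(1-\bar s)}$ twisted by powers of $\zeta'/\zeta$ and its derivatives. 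These unfold into diagonal and off-diagonal prime-power sums; the diagonal gives the main term of $S$ as an explicit bilinear form in the $a_n$, while the off-diagonal and archimedean pieces must be shown to be of lower order uniformly for $n\le T^{\theta}$. I expect this evaluation — retaining every surviving diagonal term, and in particular the interaction between $\zeta'/\zeta$ and the large archimedean factor $R$ that is special to $\xi'$ — to be the principal obstacle.

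With $S$ and $I$ in hand, both are explicit real quadratic forms in the coefficient vector $(a_n)$, so $\mathcal{R}$ becomes a generalized Rayleigh quotient. Restricting to a convenient finite-dimensional family — for instance $a_n=\mu(n)\,P(\log(y/n)/\log y)$ for a low-degree polynomial $P$, or $a_n$ supported on a short resonating set — turns the maximization and the minimization of $\mathcal{R}$ (over $P$, and over the admissible $\theta$) into a finite linear-algebra and calculus problem. Carrying this out and checking that the extremal $\mathcal{R}$ exceeds $1/0.7203$ in the maximizing case and falls below $1/1.5$ in the minimizing case yields $\liminf_{\gamma_1}\delta(\gamma_1)<0.7203$ and $\limsup_{\gamma_1}\delta(\gamma_1)>1.5$. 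The stated constants are precisely the outputs of this variational problem, and enlarging the test family would sharpen them.
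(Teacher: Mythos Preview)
Your overall plan matches the paper's: amplify with a short Dirichlet polynomial, reach the zeros of $\xi'$ via a contour integral of $\xi''/\xi'$, expand $\xi''/\xi'$ through $\xi'/\xi=\zeta'/\zeta+L(s)$ as a series in powers of $(\zeta'/\zeta)/L(s)$---this is exactly the Farmer--Gonek lemma the paper quotes as Lemma~4---and then optimize over the coefficients. So the ``hardest step'' you flag is correctly anticipated and handled essentially as you describe.

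The genuine gap is your bridge from the point-value moment $S=\sum|A(\tfrac12+i\gamma_1)|^2$ to the gap inequalities. Your claim $\liminf\delta\le1/\mathcal R\le\limsup\delta$ rests on the Riemann-sum identification $\sum_{\gamma_1}|A(\tfrac12+i\gamma_1)|^2(\gamma_1'-\gamma_1)\approx I$, justified by ``frequencies below $\theta L$, so $|A|^2$ varies slowly across a gap.'' But the phase change of $A$ across a typical gap is of order $\theta$, not $o(1)$; the cheap pointwise bound $|(|A|^2)'|\ll\theta L\,(\sum|a_n|n^{-1/2})^2$ gives an error larger than $\theta I$ by a factor of the length of the polynomial, and even an $L^2$ argument via $\int|A'|^2$ needs a priori control on $\sum(\gamma_1'-\gamma_1)^2$. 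No Gallagher-type lemma supplies this quadrature. Whatever $O(\theta)$ loss survives then competes directly with the amplification gain, which also vanishes as $\theta\to0$, so it is not clear the constants $0.7203$ and $1.5$ are actually reached. The paper avoids this entirely by using Mueller's functional
\[
h_1(\alpha,M)=\frac{\int_{-\pi\alpha/L}^{\pi\alpha/L}\sum_{T<\gamma_1\le2T}|M(\tfrac12+i\gamma_1+it)|^2\,dt}{\int_T^{2T}|M(\tfrac12+it)|^2\,dt},
\]
for which a covering argument gives the \emph{exact} inequalities $h_1(\liminf\delta,M)\le1+o(1)\le h_1(\limsup\delta,M)$ with no quadrature error. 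The numerator is evaluated just as you outline, and one checks $h_1(1.5,M_1)<1$ and $h_1(0.7203,M_2)>1$ for $M_j$ built from $d_r(n)$ and $\mu_r(n)$ with $r=2$ and explicit quadratic smoothings $f$. Note too that $h_1(\alpha,M)$ depends nonlinearly on $\alpha$ through $\sin(\pi\alpha\eta/2)$; your $\mathcal R$ is essentially $h_1'(0,M)$ and carries strictly less information, so even with the bridge repaired your Rayleigh-quotient optimization would not a priori recover the same constants.
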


\newtheorem{rem}{Remark}\begin{rem}
\emph{The existence of small and large gaps between the zeros of the Riemann zeta-function have been investigated by various authors [\textbf{\ref{MO}},\textbf{\ref{CGG1}},\textbf{\ref{CGG2}},\textbf{\ref{H}},\textbf{\ref{Ng}}]. The current best results, assuming the Riemann hypothesis, assert that $\liminf_{\gamma}\delta(\gamma)<0.5172$ and $\limsup_{\gamma}\delta(\gamma)>2.6306$, where $\gamma$ runs over the ordinates of the zeros of $\zeta(s)$. As discussed in [\textbf{\ref{FG}}], it is not surprising that these results are better than those obtained in our context.}
\end{rem}

We next define the upper and lower distribution functions
\begin{equation*}
D^{+}(\alpha)=\limsup_{T\rightarrow\infty}D(\alpha,T)\ \ \textrm{and}\ \ D^{-}(\alpha)=\liminf_{T\rightarrow\infty}D(\alpha,T),
\end{equation*}
where
\begin{equation*}
D(\alpha,T)=\bigg(\frac{1}{2\pi}T\log T\bigg)^{-1}\sum_{\substack{0<\gamma_1\leq T\\\delta(\gamma_1)\leq\alpha}}1.
\end{equation*}
Little is known about $D^{+}(\alpha)$ and $D^{-}(\alpha)$. It is expected that $D^{+}(\alpha)=D^{-}(\alpha)\ (=D(\alpha))$ for all $\alpha$ and that $D(0)=0$, $D(\alpha)<1$ for all $\alpha$, and $D(\alpha)$ is continuous. In a recent paper, by developing an analogue of Montgomery' result [\textbf{\ref{M}}] for the pair correlation of the zeros of $\xi'(s)$, Farmer and Gonek [\textbf{\ref{FG}}] proved that
\begin{equation*}
D^{-}(0.91)>0\ \ \textrm{and}\ \ D^{-}(1)>0.035.
\end{equation*}
That means that a positive proportion of gaps between the zeros of $\xi'(s)$ are less than $0.91$ times the average spacing, and more than $3.5\%$ of the normalized neighbour gaps are smaller than average. We slightly improve upon their first statement and show that

\begin{theo}
Assume RH. Then we have
\begin{equation*}
D^{-}(0.796)>0\ \ \textrm{and}\ \ D^{+}(1.18)<1.
\end{equation*}
\end{theo}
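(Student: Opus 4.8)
The plan is to establish both statements through the method of moments applied to a carefully chosen Dirichlet polynomial serving as a mollifier for $\xi'$. Following the now-standard approach initiated by Montgomery--Odlyzko and refined by Conrey--Ghosh--Gonek, I would work with a function of the form
\begin{equation*}
B(s)=\sum_{n\leq y}\frac{b(n)}{n^{s}},
\end{equation*}
where the coefficients $b(n)$ are chosen to approximate the behaviour of $1/\xi'$ near the critical line, and $y=T^{\theta}$ for some admissible $\theta$. The key is to compare two integrals: a ``zeroth'' moment that counts the zeros weighted by $|B|^{2}$ and a ``first'' moment that detects sign changes or the local density of zeros over short intervals. Concretely, I would study the ratio of
\begin{equation*}
I_{1}=\int_{0}^{T}\bigg|B\Big(\tfrac{1}{2}+it\Big)\bigg|^{2}\,dt
\quad\text{and}\quad
I_{2}=\int_{0}^{T}\bigg|\Big(\tfrac{\xi'}{\xi}\Big)'\,B\Big(\tfrac{1}{2}+it\Big)\bigg|^{2}\,dt,
\end{equation*}
or an analogous pair adapted to $\xi'$, so that the quotient controls the average of $\delta(\gamma_{1})$ over zeros where $B$ is large.

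The first step is to set up the correct analogue of the Conrey--Ghosh--Gonek machinery for $\xi'$ rather than $\zeta$. Since $\xi'(s)=\xi(s)\big(\tfrac{\xi'}{\xi}(s)\big)$ and the logarithmic derivative $\tfrac{\xi'}{\xi}$ has a convenient expansion, I expect the zeros $\gamma_{1}$ to interlace with the $\gamma$ in a way that lets me transfer mean-value estimates. I would then express the relevant moments as sums over pairs $m,n\leq y$ and evaluate them using the RH to control the distribution of the $\gamma$, reducing everything to combinatorial sums over the mollifier coefficients. For the small-gap statement $D^{-}(0.796)>0$, the aim is to show that if too few gaps were small, the ratio $I_{2}/I_{1}$ would violate a lower bound coming from the pair-correlation input of Farmer--Gonek; for the large-gap statement $D^{+}(1.18)<1$, I would run the complementary inequality, detecting that a positive proportion of gaps must exceed the threshold.

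The optimization over the mollifier coefficients is where the numerical constants $0.796$ and $1.18$ are produced. I would take $b(n)$ of the shape $\mu(n)P\big(\tfrac{\log(y/n)}{\log y}\big)$ for a polynomial $P$ (or more generally a linear combination of such terms tailored to $\xi'$), and then the moments $I_{1}$ and $I_{2}$ become quadratic functionals in the coefficients of $P$. Minimizing (or maximizing) the resulting ratio is a variational problem whose Euler--Lagrange equation I would solve, or which I would simply handle numerically after diagonalizing the associated quadratic forms. The choice of $\theta$ (the length of the mollifier) enters as a parameter, and pushing $\theta$ as large as the mean-value theorems permit is what sharpens the constants beyond Farmer--Gonek's $0.91$.

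The main obstacle, I expect, will be the accurate evaluation of the twisted second moment $I_{2}$, since the extra differentiation in $\big(\tfrac{\xi'}{\xi}\big)'$ introduces secondary terms (arising from the $\Gamma$-factor and the $\log$-derivative of the analytic conductor) that do not appear in the classical $\zeta$ case. Controlling the cross terms between the mollifier and these arithmetic factors, and verifying that the off-diagonal contributions are genuinely lower order for $\theta$ in the permissible range, is the delicate part; once those mean values are in hand with explicit main terms, the extraction of the constants is a finite variational computation.
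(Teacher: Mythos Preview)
Your proposal has genuine gaps at two critical points.

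First, the mechanism you describe for extracting gap information is not the one that works here. The paper does \emph{not} compare $\int |B|^2$ to $\int |(\xi'/\xi)'B|^2$; indeed, $(\xi'/\xi)'$ has its poles at the zeros of $\xi$, not of $\xi'$, so that quotient does not encode the spacing of the $\gamma_1$. The correct object is $\xi''/\xi'$, and the way it enters is via Mueller's short-interval comparison: one studies
\[
h_1(\alpha,M)=\frac{\displaystyle\int_{-\pi\alpha/L}^{\pi\alpha/L}\sum_{T<\gamma_1\le 2T}|M(\tfrac12+i\gamma_1+it)|^2\,dt}{\displaystyle\int_T^{2T}|M(\tfrac12+it)|^2\,dt},
\]
where the numerator is converted to a contour integral against $\xi''/\xi'$ by the residue theorem, and the key input is the Farmer--Gonek Dirichlet-series expansion of $\xi''/\xi'$ (Lemma~4 in the paper). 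If $h_1(\lambda,M)<1$ one obtains gaps larger than $\lambda$, and if $h_1(\mu,M)>1$ one obtains gaps smaller than $\mu$. To upgrade ``there exist'' to ``positive proportion'' one then runs a Cauchy--Schwarz argument (inequalities (2) and (3) of Section~2) which brings in $\int|M|^4$ and $\sum\delta(\gamma_1)^2$.

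Second, and this is where your choice of mollifier would actually fail: a long Dirichlet polynomial of the shape $b(n)=\mu(n)P\!\big(\frac{\log y/n}{\log y}\big)$ has $\int|M|^4$ larger than $(\int|M|^2)^2/T$ by a power of $\log T$, so the Cauchy--Schwarz step degrades and the right-hand sides of (2)--(3) are $o(T\log T)$ rather than $\gg T\log T$. The paper avoids this by taking coefficients supported only on $1$ and the primes, and further uses the ``twisted'' symmetrised polynomial $M(s)=M_1(s)+M_1(1-s)$; with that choice one has $(\int|M|^2)^2\asymp T\int|M|^4$ (see Remark~3 and the appeal to \cite{CGGH} at the end of Section~5), and the optimisation over the single function $f[p]$ produces the constants $0.796$ and $1.18$.
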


\begin{rem}
\emph{It is possible that our theorems can be improved by using some other choices of some coefficients. However, we have made no serious attempt to obtain the optimal results given by this method.}
\end{rem}

In the context of the Riemann zeta-function, it is also known that a positive proportion of normalized gaps between the zeros of $\zeta(s)$ are less (more) than $0.6878$ ($1.4843$, respectively) [\textbf{\ref{So}}]. Other results involving the zeros of the higher derivatives of the Riemann $\xi$-function are also proved in [\textbf{\ref{B}}].

The paper is organized as follows. In the next section, we sketch the idea to attack our theorems. Section 3 contains all the necessary lemmas. We prove Theorem 1 in Section 4. The final section is devoted to Theorem 2.

\section{Initial manipulations}

Throughout the article, we assume the Riemann hypothesis. We also assume that $y=(T/2\pi)^{\theta}$, where $0<\theta<1/2$, and $r\geq1$. We denote $L=\log\frac{T}{2\pi}$ and define
\begin{equation*}
h_k(\alpha,M)=\frac{\int_{-\pi\alpha/L}^{\pi\alpha/L}\sum_{T<\gamma_1\leq2T}|M(\frac{1}{2}+i\gamma_1+it)|^{2k}dt}{\int_{T}^{2T}|M(\frac{1}{2}+it)|^{2k}dt},
\end{equation*}
where 
\begin{equation*}
M(s)=\sum_{n\leq y}\frac{a(n)f(\frac{\log y/n}{\log y})}{n^s}
\end{equation*}
for some arithmetic function $a(n)$ and smooth function $f(x)$. We will see later that in order to prove Theorem 1 we would like to choose $a(n)=d_r(n)$ for the large gaps, and $a(n)=\mu_r(n)$ for the small gaps, where $d_r(n)$ and $\mu_r(n)$ are the coefficients of $n^{-s}$ in the Dirichlet series of $\zeta(s)^r$ and $\zeta(s)^{-r}$, respectively:
\begin{equation*}
\zeta(s)^r=\sum_{n=1}^{\infty}\frac{d_r(n)}{n^s}\ \textrm{and}\ \zeta(s)^{-r}=\sum_{n=1}^{\infty}\frac{\mu_r(n)}{n^s}\qquad(\sigma>1).
\end{equation*}
In the case of Theorem 2, the coefficients $a(n)$ are chosen to be supported on $1$ and the primes.

Theorem 1 is based on the following idea of Mueller [\textbf{\ref{Mu}}]. Given that
\begin{equation*}
\liminf_{\gamma_1}\delta(\gamma_1)=\mu\ \ \textrm{and}\ \ \limsup_{\gamma_1}\delta(\gamma_1)=\lambda.
\end{equation*}
It is easy to see that
\begin{eqnarray*}
\int_{-\pi\mu/L}^{\pi\mu/L}\sum_{T<\gamma_1\leq2T}|M({\scriptstyle{\frac{1}{2}}}+i\gamma_1+it)|^{2k}dt&\leq&(1+o(1))\int_{T}^{2T}|M({\scriptstyle{\frac{1}{2}}}+it)|^{2k}dt\\ &&\qquad\leq\int_{-\pi\lambda/L}^{\pi\lambda/L}\sum_{T<\gamma_1\leq2T}|M({\scriptstyle{\frac{1}{2}}}+i\gamma_1+it)|^{2k}dt.
\end{eqnarray*}
So $h_k(\mu,M)\leq1+o(1)\leq h_k(\lambda,M)$. Clearly, $h_k(\alpha,M)$ is monotonically increasing with respect to $\alpha$. Therefore, if $h_k(\alpha,M)<1$ for some choice of $\alpha$ and $M$, then $\alpha<\lambda$. Similarly, if $h_k(\alpha,M)>1$ then $\alpha>\mu$. Thus it suffices to show that
\begin{equation}\label{8}
h_1(1.5,M_1)<1\ \ \textrm{and}\ \ h_1(0.7203,M_2)>1,
\end{equation}
for some $M_1$ and $M_2$.

To attack Theorem 2, we follow the setting of [\textbf{\ref{CGGH}}]. For $\gamma_{1}^{\dagger}\leq\gamma_1\leq\gamma_1'$ three consecutive ordinates of zeros of $\xi'(s)$, let
\begin{equation*}
\delta^{+}(\gamma_1)=(\gamma_1'-\gamma_1)L/2\pi,\ \textrm{and}\ \delta^{-}(\gamma_1)=(\gamma_1-\gamma_1^{\dagger})L/2\pi.
\end{equation*}
Also let
\begin{equation*}
\delta_{0}(\gamma_1)=\min\{\delta^{+}(\gamma_1),\delta^{-}(\gamma_1)\},\ \textrm{and}\ \delta_{1}(\gamma_1)=\max\{\delta^{+}(\gamma_1),\delta^{-}(\gamma_1)\}.
\end{equation*}

We first establish the formula for the large gaps. We have, up to an error term of size $O(T^{1-\varepsilon})$,
\begin{eqnarray*}
\int_{T}^{2T}|M({\scriptstyle{\frac{1}{2}}}+it)|^{2}dt&=&\sum_{T<\gamma_1\leq 2T}\int_{\gamma_1-\pi\delta^{-}(\gamma_1)/L}^{\gamma_1+\pi\delta^{+}(\gamma_1)/L}|M({\scriptstyle{\frac{1}{2}}}+it)|^{2}dt\nonumber\\
&&\!\!\!\!\!\!\!\!\!\!\!\!\!\!\!\!\!\!\!\!\!\!\!\!\!\!\!\!\!\!\!\!\!\!\!\!\!\!\!\!\!\!\!\!\!\!\!\!\!\!\!\!\!\leq\sum_{\substack{T<\gamma_1\leq2T\\\delta_1(\gamma_1)\leq\lambda}}\int_{-\pi\lambda/L}^{\pi\lambda/L}|M({\scriptstyle{\frac{1}{2}}}+i(\gamma_1+t))|^{2}dt+\sum_{\substack{T<\gamma_1\leq2T\\\delta_1(\gamma_1)>\lambda}}\int_{-\pi\delta^{-}(\gamma_1)/L}^{\pi\delta^{+}(\gamma_1)/L}|M({\scriptstyle{\frac{1}{2}}}+i(\gamma_1+t))|^{2}dt\nonumber\\
&&\!\!\!\!\!\!\!\!\!\!\!\!\!\!\!\!\!\!\!\!\!\!\!\!\!\!\!\!\!\!\!\!\!\!\!\!\!\!\!\!\!\!\!\!\!\!\!\!\!\!\!\!\!\leq h_1(\lambda,M)\int_{T}^{2T}|M({\scriptstyle{\frac{1}{2}}}+it)|^{2}dt+\sum_{\substack{T<\gamma_1\leq2T\\\delta_1(\gamma_1)>\lambda}}\int_{-\pi\delta^{-}(\gamma_1)/L}^{\pi\delta^{+}(\gamma_1)/L}|M({\scriptstyle{\frac{1}{2}}}+i(\gamma_1+t))|^{2}dt.
\end{eqnarray*}
Hence
\begin{equation*}
(1-h_1(\lambda,M))\int_{T}^{2T}|M({\scriptstyle{\frac{1}{2}}}+it)|^{2}dt+O(T^{1-\varepsilon})\leq\sum_{\substack{T<\gamma_1\leq2T\\\delta_1(\gamma_1)>\lambda}}\int_{-\pi\delta^{-}(\gamma_1)/L}^{\pi\delta^{+}(\gamma_1)/L}|M({\scriptstyle{\frac{1}{2}}}+i(\gamma_1+t))|^{2}dt.
\end{equation*}
Using Cauchy's inequality, the right hand side is bounded by
\begin{equation*}
\bigg(\frac{2\pi}{L}\bigg)^{\frac{1}{2}}\bigg(\sum_{\substack{T<\gamma_1\leq2T\\\delta_1(\gamma_1)>\lambda}}1\bigg)^{\frac{1}{4}}\bigg(\sum_{T<\gamma_1\leq2T}\delta(\gamma_1)^2\bigg)^{\frac{1}{4}}\bigg(\int_{T}^{2T}|M({\scriptstyle{\frac{1}{2}}}+it)|^{4}dt\bigg)^{\frac{1}{2}}.
\end{equation*}
Thus, if $h_1(\lambda,M)<1$,
\begin{equation}\label{2}
\sum_{\substack{T<\gamma_1\leq2T\\\delta_1(\gamma_1)>\lambda}}1\geq\frac{(1-h_1(\lambda,M))^4\big(\int_{T}^{2T}|M({\scriptstyle{\frac{1}{2}}}+it)|^{2}dt\big)^4}{4\pi^2\big(\sum_{T<\gamma_1\leq2T}\delta(\gamma_1)^2\big)\big(\int_{T}^{2T}|M({\scriptstyle{\frac{1}{2}}}+it)|^{4}dt\big)^2}L^2+o(1).
\end{equation}

The small gaps can be treated in a similar way. Up to an error term of size $O(T^{1-\varepsilon})$, we have
\begin{eqnarray*}
\int_{T}^{2T}|M({\scriptstyle{\frac{1}{2}}}+it)|^{2}dt&=&\sum_{T<\gamma_1\leq 2T}\int_{\gamma_1-\pi\delta^{-}(\gamma_1)/L}^{\gamma_1+\pi\delta^{+}(\gamma_1)/L}|M({\scriptstyle{\frac{1}{2}}}+it)|^{2}dt\nonumber\\
&&\!\!\!\!\!\!\!\!\!\!\!\!\!\!\!\!\!\!\!\!\!\!\!\!\!\!\!\!\!\!\!\!\!\!\!\!\!\!\!\!\!\!\!\!\!\!\!\!\!\!\!\!\!\geq\sum_{\substack{T<\gamma_1\leq2T\\\delta_0(\gamma_1)<\mu}}\int_{-\pi\delta^{-}(\gamma_1)/L}^{\pi\delta^{+}(\gamma_1)/L}|M({\scriptstyle{\frac{1}{2}}}+i(\gamma_1+t))|^{2}dt+\sum_{\substack{T<\gamma_1\leq2T\\\delta_0(\gamma_1)\geq\mu}}\int_{-\pi\mu/L}^{\pi\mu/L}|M({\scriptstyle{\frac{1}{2}}}+i(\gamma_1+t))|^{2}dt\nonumber\\
&&\!\!\!\!\!\!\!\!\!\!\!\!\!\!\!\!\!\!\!\!\!\!\!\!\!\!\!\!\!\!\!\!\!\!\!\!\!\!\!\!\!\!\!\!\!\!\!\!\!\!\!\!\!\geq h_1(\mu,M)\int_{T}^{2T}|M({\scriptstyle{\frac{1}{2}}}+it)|^{2}dt\nonumber\\
&&\!\!\!\!\!\!\!\!\!\!\!\!\!\!\!\!\!\!\!\!\!\!\!\!\!\!\!\!-\sum_{\substack{T<\gamma_1\leq2T\\\delta_0(\gamma_1)<\mu}}\int_{\pi\delta_0(\gamma_1)/L}^{\pi\mu/L}\big(|M({\scriptstyle{\frac{1}{2}}}+i(\gamma_1+t))|^{2}+|M({\scriptstyle{\frac{1}{2}}}+i(\gamma_1-t))|^{2}\big)dt.
\end{eqnarray*}
Hence
\begin{eqnarray*}
(h_1(\mu,M)-1)\int_{T}^{2T}|M({\scriptstyle{\frac{1}{2}}}+it)|^{2}dt+O(T^{1-\varepsilon})&\leq&\nonumber\\
&&\!\!\!\!\!\!\!\!\!\!\!\!\!\!\!\!\!\!\!\!\!\!\!\!\!\!\!\!\!\!\!\!\!\!\!\!\!\!\!\!\!\!\!\!\!\!\!\!\!\!\!\!\!\!\!\!\!\!\!\!\!\!\!\!\!\!\!\!\!\!\!\!\!\!\!\!\!\!\!\!\!\!\!\!\!\!\!\!\!\!\!\!\!\!\!\!\!\!\!\!\!\!\!\!\!\!\sum_{\substack{T<\gamma_1\leq2T\\\delta_0(\gamma_1)<\mu}}\int_{\pi\delta_0(\gamma_1)/L}^{\pi\mu/L}\big(|M({\scriptstyle{\frac{1}{2}}}+i(\gamma_1+t))|^{2}+|M({\scriptstyle{\frac{1}{2}}}+i(\gamma_1-t))|^{2}\big)dt.
\end{eqnarray*}
Using Cauchy's inequality, the right hand side is 
\begin{eqnarray*}
&\leq&\bigg(\frac{2\pi\mu}{L}\bigg)^{\frac{1}{2}}\bigg(\sum_{\substack{T<\gamma_1\leq2T\\\delta_0(\gamma_1)<\mu}}1\bigg)^{\frac{1}{2}}\nonumber\\
&&\qquad\bigg(\sum_{\substack{T<\gamma_1\leq2T\\\delta_0(\gamma_1)<\mu}}\int_{\pi\delta_0(\gamma_1)/L}^{\pi\mu/L}\big(|M({\scriptstyle{\frac{1}{2}}}+i(\gamma_1+t))|^{4}+|M({\scriptstyle{\frac{1}{2}}}+i(\gamma_1-t))|^{4}\big)dt\bigg)^{\frac{1}{2}}\nonumber\\
&\leq&\bigg(\frac{2\pi\mu}{L}\bigg)^{\frac{1}{2}}\bigg(\sum_{\substack{T<\gamma_1\leq2T\\\delta_0(\gamma_1)<\mu}}1\bigg)^{\frac{1}{2}}\bigg(h_2(\mu,M)\int_{T}^{2T}|M({\scriptstyle{\frac{1}{2}}}+it)|^{4}dt\bigg)^{\frac{1}{2}}.
\end{eqnarray*}
Thus, if $h_1(\mu,M)>1$,
\begin{equation}\label{3}
\sum_{\substack{T<\gamma_1\leq2T\\\delta_0(\gamma_1)<\mu}}1\geq\frac{(h_1(\mu,M)-1)^2\big(\int_{T}^{2T}|M({\scriptstyle{\frac{1}{2}}}+it)|^{2}dt\big)^2}{2\pi\mu h_2(\mu,M)\int_{T}^{2T}|M({\scriptstyle{\frac{1}{2}}}+it)|^{4}dt}L+o(1).
\end{equation}

In the rest of the paper, we will illustrate the inequality $h_1(\lambda,M_1)<1<h_1(\mu,M_2)$ for suitable $\lambda$, $\mu$, $M_1$, $M_2$, and evaluate the expressions in \eqref{2} and \eqref{3}.

\begin{rem}
\emph{To exhibit the existence of positive proportion of large and small gaps, we need to show that the orders of magnitude of the right hand sides in \eqref{2} and \eqref{3} are $T\log T$. It will be clear later in our proof that this requires
\begin{equation*}
\bigg(\int_{T}^{2T}|M({\scriptstyle{\frac{1}{2}}}+it)|^{2}dt\bigg)^2\asymp T\int_{T}^{2T}|M({\scriptstyle{\frac{1}{2}}}+it)|^{4}dt.
\end{equation*}
This condition restricts the choice of our Dirichlet polynomial $M$.}
\end{rem}

\section{Auxiliary lemmas}

We need various lemmas concerning divisor sums and other divisor-like sums. We first introduce some notations which we will use throughout. Let $A_r(n)=A_r(n,1)$, where
\begin{equation*}
A_{r}(n,s):=\prod_{p^\lambda||n}\frac{\sum_{j=0}^{\infty}d_r(p^j)d_r(p^{j+\lambda})p^{-js}}{\sum_{j=0}^{\infty}d_r(p^j)^2p^{-js}}\qquad(\sigma>1).
\end{equation*}
We define 
\begin{equation*}
F_\tau(n)=\prod_{p|n}(1+O(p^{-\tau})),
\end{equation*}
for $\tau>0$ and the constant in the $O$-term is implicit and independent of $\tau$. We note that
\begin{equation*}
A_{r}(n,s)\ll d_r(n)F_\tau(n)\qquad(\sigma\geq\tau>0).
\end{equation*}

\newtheorem{lemm}{Lemma}\begin{lemm}
We have
\begin{equation*}
\sum_{n\leq y}\frac{d_{r}(n)^2}{n}=\frac{a_{r}(\log y)^{r^2}}{\Gamma(r^{2}+1)}+O((\log y)^{r^2-1}),
\end{equation*}
where
\begin{equation*}
a_{r}=\prod_{p}\bigg(\bigg(1-\frac{1}{p}\bigg)^{r^{2}}\sum_{n\geq0}\frac{d_{r}(p^n)^2}{p^{n}}\bigg).
\end{equation*}
\end{lemm}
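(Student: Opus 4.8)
The plan is to peel off the expected singular part of the Dirichlet series and reduce the weighted sum to a pure divisor sum. Since $d_r(n)^2$ is multiplicative, I would first write
\[
D(s):=\sum_{n=1}^{\infty}\frac{d_r(n)^2}{n^s}=\prod_{p}\sum_{j=0}^{\infty}\frac{d_r(p^j)^2}{p^{js}}\qquad(\sigma>1),
\]
and factor out the anticipated pole of order $r^2$ by setting $D(s)=\zeta(s)^{r^2}B(s)$, where
\[
B(s)=\prod_p\bigg(1-\frac{1}{p^s}\bigg)^{r^2}\sum_{j=0}^{\infty}\frac{d_r(p^j)^2}{p^{js}}.
\]
Using $d_r(p^0)=1$ and $d_r(p)=r$, the $p$-th factor of $B(s)$ equals $(1-p^{-s})^{r^2}(1+r^2p^{-s}+\cdots)=1+O(p^{-2\sigma})$, the linear terms in $p^{-s}$ cancelling. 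Hence the Euler product for $B(s)$ converges absolutely and is holomorphic in $\sigma>1/2$, and comparing with the definition of $a_r$ gives $B(1)=a_r$. Writing $B(s)=\sum_{e}b(e)e^{-s}$, the coefficients $b(e)$ are multiplicative and supported on squarefull $e$ (since $b(p)=0$), and satisfy $\sum_e |b(e)|(\log e)^j e^{-\sigma}<\infty$ for every $\sigma>1/2$ and $j\ge0$.

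Next I would exploit this factorization arithmetically. Since $\zeta(s)^{r^2}=\sum_d d_{r^2}(d)d^{-s}$, the identity $D(s)=\zeta(s)^{r^2}B(s)$ gives the convolution $d_r(n)^2=\sum_{de=n}d_{r^2}(d)\,b(e)$, whence
\[
\sum_{n\le y}\frac{d_r(n)^2}{n}=\sum_{e\le y}\frac{b(e)}{e}\sum_{d\le y/e}\frac{d_{r^2}(d)}{d}.
\]
The inner sum is now a pure divisor sum, for which I would invoke the classical asymptotic
\[
\sum_{d\le X}\frac{d_{r^2}(d)}{d}=\frac{(\log X)^{r^2}}{\Gamma(r^2+1)}+O\big((\log X)^{r^2-1}\big),
\]
valid for $r^2\ge1$. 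For integer $r^2=k$ this follows from Landau's estimate for $\sum_{d\le X}d_k(d)$ by partial summation; in general it is the Selberg--Delange asymptotic for the coefficients of $\zeta(s)^{r^2}$.

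To assemble the two inputs I would restrict first to $e\le\sqrt y$ and write $(\log(y/e))^{r^2}=(\log y)^{r^2}+O\big((\log y)^{r^2-1}\log e\big)$, which holds uniformly there because $\log(y/e)\in[\tfrac12\log y,\log y]$ and $r^2-1\ge0$. Then the convergence of $\sum_e b(e)e^{-1}$ to $B(1)=a_r$ and of $\sum_e|b(e)|(\log e)e^{-1}$ turns the main contribution into
\[
\frac{1}{\Gamma(r^2+1)}\sum_{e\le\sqrt y}\frac{b(e)}{e}(\log(y/e))^{r^2}=\frac{a_r}{\Gamma(r^2+1)}(\log y)^{r^2}+O\big((\log y)^{r^2-1}\big),
\]
while the tail $\sqrt y<e\le y$ contributes $O\big(y^{-\delta}(\log y)^{r^2}\big)=o(1)$ for some $\delta>0$, since $b(e)$ is supported on the sparse squarefull integers, and the secondary error $O((\log(y/e))^{r^2-1})$ summed against $|b(e)|/e$ is $O((\log y)^{r^2-1})$. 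Collecting these yields the lemma.

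I expect the genuinely delicate point to be the auxiliary divisor asymptotic for $\sum_{d\le X}d_{r^2}(d)/d$ with the stated power-of-log error, especially when $r^2$ is not an integer: this is precisely where the Selberg--Delange machinery (or a Hankel-contour evaluation of $\frac{1}{2\pi i}\int \zeta(s+1)^{r^2}X^s s^{-1}\,ds$ around the branch point at $s=0$) enters, and securing the $O((\log X)^{r^2-1})$ saving requires controlling $\zeta$ on the shifted contour. By contrast, the factorization and the summation over $e$ are routine once the absolute convergence of $B(s)$ for $\sigma>1/2$ is established.
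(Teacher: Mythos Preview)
Your proof is correct and is exactly the ``standard'' argument the paper alludes to (the paper gives no details beyond the word \emph{standard}); in particular, factoring $D(s)=\zeta(s)^{r^2}B(s)$, checking $b(p)=0$ so that $B(s)$ converges absolutely for $\sigma>1/2$, and then reducing to the Selberg--Delange asymptotic for $\sum_{d\le X}d_{r^2}(d)/d$ is precisely the intended route (indeed the paper invokes Selberg's result explicitly in the very next lemma). Your remark that the non-integer case of the inner divisor estimate is the only genuinely non-elementary ingredient is also on target.
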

\begin{proof}
The proof of this fact is standard.
\end{proof}

\begin{lemm}
There exists an absolute constant $\tau_0>0$ such that
\begin{equation*}
\sum_{m\leq x}\frac{d_r(m)d_r(mn)}{m}=\frac{a_rA_r(n)(\log x)^{r^2}}{\Gamma(r^2+1)}+O(d_r(n)F_{\tau_0}(n)(\log x)^{r^2-1}).
\end{equation*}
\end{lemm}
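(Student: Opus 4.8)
The plan is to reduce the estimate to the analytic behaviour of the Dirichlet series
\[
G(s)=\sum_{m=1}^{\infty}\frac{d_r(m)d_r(mn)}{m^{s+1}},
\]
and to extract the main term by a Perron-type contour integral in the spirit of the Selberg--Delange method. Since $d_r$ is multiplicative and, for each prime $p$, the behaviour of $d_r(mn)$ is governed by $v_p(n)$, the series $G(s)$ has an Euler product whose local factor at $p$ is $\sum_{j\ge0}d_r(p^j)d_r(p^{j+\lambda_p})p^{-j(s+1)}$ with $\lambda_p=v_p(n)$. Factoring out the principal part $\zeta(s+1)^{r^2}$, I would write
\[
G(s)=\zeta(s+1)^{r^2}\,B(s)\,A_r(n,s+1),\qquad B(s)=\prod_p\Big(1-p^{-(s+1)}\Big)^{r^2}\sum_{j\ge0}\frac{d_r(p^j)^2}{p^{j(s+1)}}.
\]
The cancellation of the $p^{-(s+1)}$ coefficients in each local factor of $B$ shows that $B(s)$ converges absolutely for $\sigma>-\tfrac12$, and by Lemma 1 (the case $n=1$) one has $B(0)=a_r$ while $A_r(n,1)=A_r(n)$, so the constant attached to the singularity at $s=0$ is exactly $a_rA_r(n)$.

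Next I would apply Perron's formula,
\[
\sum_{m\le x}\frac{d_r(m)d_r(mn)}{m}=\frac{1}{2\pi i}\int_{(c)}G(s)\frac{x^s}{s}\,ds,
\]
truncate at a height $T_0$ a fixed power of $x$, and deform the line of integration into a truncated Hankel contour $\mathcal{H}$ wrapping the branch point at $s=0$. Here I would invoke RH (assumed throughout) to ensure that $\zeta(s+1)^{r^2}$ is analytic, and $B(s)$, $A_r(n,s+1)$ regular, on a fixed strip $-\eta\le\sigma\le c$ to the left of $0$, so that the horizontal and left-hand pieces of the contour contribute $O\big(x^{-\eta}\,\mathrm{poly}(\log x)\big)$. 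Writing $\zeta(s+1)^{r^2}=s^{-r^2}U(s)$ with $U$ analytic and $U(0)=1$, the integrand becomes $s^{-r^2-1}U(s)B(s)A_r(n,s+1)\,x^s$, and the Hankel formula $\frac{1}{2\pi i}\int_{\mathcal H}s^{-w}x^s\,ds=(\log x)^{w-1}/\Gamma(w)$, applied after a Taylor expansion $U(s)B(s)A_r(n,s+1)=\sum_{k\ge0}c_ks^k$ about $s=0$, yields
\[
\sum_{k\ge0}c_k\frac{(\log x)^{r^2-k}}{\Gamma(r^2+1-k)},\qquad c_0=B(0)A_r(n,1)=a_rA_r(n).
\]
The $k=0$ term is the asserted main term, and the terms with $k\ge1$ are $O((\log x)^{r^2-1})$.

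It then remains to make the error uniform in $n$ with the stated factor $d_r(n)F_{\tau_0}(n)$. The only $n$-dependence sits in $A_r(n,s+1)$, for which the excerpt records $A_r(n,s)\ll d_r(n)F_\tau(n)$ valid for $\sigma\ge\tau>0$. Keeping the contour in $\sigma\ge\tau_0-1$ for a fixed $\tau_0\in(\tfrac12,1)$ places us simultaneously inside the region of absolute convergence of $B$ and within the range of this bound; a Cauchy estimate then controls the coefficients $c_k$, which involve derivatives of $A_r(n,s+1)$ at $s=0$, by the same order $d_r(n)F_{\tau_0}(n)$, producing the error $O(d_r(n)F_{\tau_0}(n)(\log x)^{r^2-1})$. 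I expect the main obstacle to be precisely the non-integer exponent: because $r^2$ need not be an integer, $\zeta(s+1)^{r^2}$ has a branch point rather than a pole at $s=0$, so the main term cannot be read off from a residue and the Hankel-contour analysis above is genuinely required; the secondary difficulty is arranging the contour and the Taylor expansion so that the $n$-uniformity emerges with exactly the factor $d_r(n)F_{\tau_0}(n)$ and not something larger.
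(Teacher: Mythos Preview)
Your argument is correct and follows the same route as the paper: compute the Euler product of $\sum_m d_r(m)d_r(mn)m^{-s}$, factor out $\zeta(s)^{r^2}$ and $A_r(n,s)$, and deduce the asymptotic from a Tauberian theorem of Selberg--Delange type. The paper simply cites Theorem~2 of Selberg's \emph{Note on a paper by L.~G.~Sathe} as a black box, whereas you unpack that theorem via Perron plus a Hankel contour around $s=0$; the underlying mechanism is identical, including the uniformity in $n$, which in Selberg's formulation comes from a supremum of $|A_r(n,s)|$ over a fixed disc and in your version from the same thing via Cauchy's inequality.

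One small remark: you do not need RH here. Selberg's theorem (and the Selberg--Delange method generally) only requires the classical zero-free region $\sigma>1-c/\log(|t|+2)$, which already suffices to push the contour slightly left of $\sigma=0$ and absorb the contribution there into the error term. Invoking RH is harmless in this paper's setting but is not part of the argument the paper intends.
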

\begin{proof}
We note that $G(m)=g(mn)/g(n)$ is a multiplicative function whenever $g$ is (provided that $g(n)\ne0$). Hence
\begin{eqnarray*}
\sum_{m=1}^{\infty}\frac{d_r(m)d_r(mn)}{m^s}&=&\prod_{p\nmid n}\bigg(\sum_{j=0}^{\infty}\frac{d_r(p^j)^2}{p^{js}}\bigg)\prod_{p^\lambda||n}\bigg(\sum_{j=0}^{\infty}\frac{d_r(p^j)d_r(p^{j+\lambda})}{p^{js}}\bigg)\\
&=&\zeta(s)^{r^2}A_r(n,s)\prod_{p}\bigg(\bigg(1-\frac{1}{p}\bigg)^{r^{2}}\sum_{j=0}^{\infty}\frac{d_{r}(p^j)^2}{p^{js}}\bigg),
\end{eqnarray*}
for $\sigma>1$. The lemma follows by applying Theorem 2 of Selberg [\textbf{\ref{S}}].
\end{proof}

The following lemma is an easy consequence of Lemma 1.

\begin{lemm}
Given that
\begin{equation*}
M(s)=\sum_{n\leq y}\frac{d_r(n)f(\frac{\log y/n}{\log y})}{n^s}.
\end{equation*}
Then we have
\begin{equation*}
\int_{T}^{2T}|M({\scriptstyle{\frac{1}{2}}}+it)|^2dt\sim \frac{a_{r}T(\log y)^{r^2}}{\Gamma(r^{2})}\int_{0}^{1}(1-x)^{r^{2}-1}f(x)^2dx.
\end{equation*}
\end{lemm}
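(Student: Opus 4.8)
The plan is to expand the square, reduce to the diagonal by a mean value estimate, and then evaluate the resulting divisor sum by partial summation against Lemma~1. Writing $b(n)=d_r(n)f\big(\tfrac{\log(y/n)}{\log y}\big)$, we have $M(\tfrac12+it)=\sum_{n\le y}b(n)n^{-1/2-it}$, so that
\[
\int_T^{2T}\big|M(\tfrac12+it)\big|^2\,dt=\sum_{m,n\le y}\frac{b(m)b(n)}{\sqrt{mn}}\int_T^{2T}\Big(\frac{n}{m}\Big)^{it}\,dt.
\]
The diagonal terms $m=n$ contribute $T\sum_{n\le y}\frac{b(n)^2}{n}$. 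For the off-diagonal terms the inner integral is $\ll 1/|\log(m/n)|$, and a standard estimate (equivalently, the Montgomery--Vaughan mean value theorem) bounds their total contribution by $O\big(\sum_{n\le y}b(n)^2\big)=O\big(\sum_{n\le y}d_r(n)^2\big)$. Since $\sum_{n\le y}d_r(n)^2\ll y(\log y)^{r^2-1}$ and $y=(T/2\pi)^\theta$ with $\theta<1/2$, this error is $O\big(T^\theta(\log T)^{r^2-1}\big)=o\big(T(\log y)^{r^2}\big)$ and hence negligible against the expected main term. This is the one point where the hypothesis $\theta<1/2$ (indeed any $\theta<1$) is used.

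It then remains to evaluate $S:=T\sum_{n\le y}\frac{d_r(n)^2}{n}f\big(\tfrac{\log(y/n)}{\log y}\big)^2$. Set $\Sigma(u)=\sum_{n\le u}\frac{d_r(n)^2}{n}$ and $\phi(u)=f\big(\tfrac{\log(y/u)}{\log y}\big)^2$, so that $S=T\int_{1^-}^{y}\phi(u)\,d\Sigma(u)$. By Lemma~1, $\Sigma(u)=\frac{a_r(\log u)^{r^2}}{\Gamma(r^2+1)}+O\big((\log u)^{r^2-1}\big)$; substituting the main term gives $d\Sigma(u)\approx\frac{a_r(\log u)^{r^2-1}}{\Gamma(r^2)}\frac{du}{u}$, where I used $\Gamma(r^2+1)=r^2\Gamma(r^2)$. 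Making the change of variables $x=\tfrac{\log(y/u)}{\log y}$, equivalently $\log u=(1-x)\log y$ and $\frac{du}{u}=-\log y\,dx$, one finds
\[
S\sim\frac{a_rT(\log y)^{r^2}}{\Gamma(r^2)}\int_0^1(1-x)^{r^2-1}f(x)^2\,dx,
\]
which is exactly the claimed asymptotic.

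The remaining work is purely to justify that the error terms are of lower order. Carrying the $O\big((\log u)^{r^2-1}\big)$ term of Lemma~1 through the partial summation contributes $O\big(T(\log y)^{r^2-1}\big)$, a factor $\log y$ smaller than the main term; here one uses that $f$ is smooth and bounded on $[0,1]$ so that the Stieltjes integration by parts and the boundary terms at $u=1$ and $u=y$ are harmless (near $u=y$ one has $x\to0$ and $(1-x)^{r^2-1}$ is integrable since $r\ge1$). I do not expect any genuine difficulty here: the only things to watch are the interplay between the smoothness of $f$ and the integrability of $(1-x)^{r^2-1}$ at the endpoints, together with the bookkeeping that turns $\Gamma(r^2+1)$ into $\Gamma(r^2)$. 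The main obstacle, such as it is, is the off-diagonal bound of the first paragraph, and that is controlled entirely by the size restriction $y<T^{1/2}$.
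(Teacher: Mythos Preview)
Your proof is correct and follows essentially the same route as the paper: reduce to the diagonal via the Montgomery--Vaughan mean value theorem, then evaluate the resulting sum $\sum_{n\le y}d_r(n)^2f\big(\tfrac{\log(y/n)}{\log y}\big)^2/n$ by Lemma~1 and Stieltjes integration with the substitution $x=\tfrac{\log(y/n)}{\log y}$. You simply spell out the details the paper leaves implicit.
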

\begin{proof}
Using Montgomery \& Vaughan's mean value theorem [\textbf{\ref{MV1}}] we have
\begin{equation*}
\int_{T}^{2T}|M({\scriptstyle{\frac{1}{2}}}+it)|^2dt\sim \sum_{n\leq y}\frac{d_r(n)^2f(\frac{\log y/n}{\log y})^2}{n}.
\end{equation*}
The lemma follows from Lemma 1 and Stieltjes integration.
\end{proof}

In order to estimate the nominator of $h_k(\alpha,M)$ we will use Cauchy's residue theorem. To this end, we need a Dirichlet series for $\xi''/\xi'(s)$. From the definition of the Riemann $\xi$-function we have
\begin{equation*}
\frac{\xi'}{\xi}(s)=L(s)+\frac{\zeta'}{\zeta}(s),
\end{equation*}
where
\begin{equation}\label{9}
L(s)=\frac{1}{s}+\frac{1}{s-1}-\frac{\log\pi}{2}+\frac{1}{2}\frac{\Gamma'}{\Gamma}\bigg(\frac{s}{2}\bigg).
\end{equation}
We cite a lemma of Farmer \& Gonek [\textbf{\ref{FG}}].

\begin{lemm}
For $\sigma=1+L^{-1}$, $T<\Im{s}\leq 2T$ and $K$ a large positive integer we have
\begin{equation*}
\frac{\xi''}{\xi'}(s)=\frac{L}{2}+\sum_{n=1}^{\infty}\frac{a_{K}(n,s)}{n^s}+O(1+L^{2}2^{-K}).
\end{equation*}
Here we have written
\begin{equation*}
a_K(n,s)=\sum_{k=0}^{K}\frac{\alpha_k(n)}{L(s)^k},
\end{equation*}
where
\begin{equation*}
\alpha_k(n)=\left\{ \begin{array}{ll}
-\Lambda(n) &\qquad \textrm{if $k=0$}\\
\Lambda_{k-1}*\Lambda\log(n) &\qquad \textrm{if $k\geq1$.}
\end{array} \right.
\end{equation*}
\end{lemm}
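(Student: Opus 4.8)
Although the statement is quoted from Farmer and Gonek, the derivation is short and I would reconstruct it as follows. Everything rests on the logarithmic-derivative identity: setting $G(s)=\frac{\xi'}{\xi}(s)$, one differentiates $\xi'=G\xi$ to get $\xi''=G'\xi+G\xi'$, whence
\[
\frac{\xi''}{\xi'}(s)=G(s)+\frac{G'(s)}{G(s)}.
\]
The plan is to insert the decomposition $G(s)=L(s)+\frac{\zeta'}{\zeta}(s)$ from the text into both summands, extract the main term $\frac{L}{2}$ from the additive copy of $L(s)$, and develop $G'/G$ as a truncated geometric series in $L(s)^{-1}\frac{\zeta'}{\zeta}(s)$.

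For the additive term, Stirling's formula applied to the digamma piece of \eqref{9} gives $L(s)=\frac{L}{2}+O(1)$ uniformly for $\sigma=1+L^{-1}$ and $T<\Im s\le2T$; this supplies the main term $\frac{L}{2}$ with an admissible $O(1)$, while the surviving $\frac{\zeta'}{\zeta}(s)=-\sum_n\Lambda(n)n^{-s}$ is exactly the $k=0$ term $\sum_n\alpha_0(n)n^{-s}$. For the quotient I would write $G'=L'+(\zeta'/\zeta)'$, note that $L'(s)\ll 1/T$ so that $L'/G\ll1$ is negligible, and expand
\[
\frac{(\zeta'/\zeta)'(s)}{G(s)}=\frac{(\zeta'/\zeta)'(s)}{L(s)}\sum_{j\ge0}\Big(\frac{-\zeta'/\zeta(s)}{L(s)}\Big)^{j}.
\]
Since $(\zeta'/\zeta)'(s)=\sum_n\Lambda(n)\log n\,n^{-s}$ and $(-\zeta'/\zeta(s))^{j}=\sum_n\Lambda_j(n)n^{-s}$ with $\Lambda_j$ the $j$-fold Dirichlet convolution of $\Lambda$, the coefficient of $L(s)^{-k}$ (i.e.\ $j=k-1$) is the Dirichlet series with coefficients $\Lambda_{k-1}*\Lambda\log=\alpha_k$. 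Truncating the geometric series at $j=K-1$ reproduces $\sum_{k=1}^{K}L(s)^{-k}\sum_n\alpha_k(n)n^{-s}$, and summing the finite progression shows that the truncation differs from $(\zeta'/\zeta)'/G$ by exactly $-\frac{(\zeta'/\zeta)'(s)}{G(s)}\big(-\zeta'/\zeta(s)/L(s)\big)^{K}$.

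Collecting the pieces yields the clean identity
\[
\frac{\xi''}{\xi'}(s)-\frac{L}{2}-\sum_n\frac{a_K(n,s)}{n^s}=O(1)+\frac{L'(s)}{G(s)}+\frac{(\zeta'/\zeta)'(s)}{G(s)}\Big(\frac{-\zeta'/\zeta(s)}{L(s)}\Big)^{K},
\]
so the whole matter reduces to bounding the last term by $O(L^2 2^{-K})$. This is where I expect the only real difficulty, and where RH enters. The crude majorant $|\zeta'/\zeta(s)|\le-\zeta'/\zeta(\sigma)\approx L$ combined with $|L(s)|\approx L/2$ only gives a ratio $\le2$, which is useless; indeed, bounding the tail term-by-term against the real point $\sigma$ in fact diverges. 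Instead one must estimate the tail as a whole through its geometric ratio \emph{at the point} $s$: RH furnishes $|\zeta'/\zeta(s)|\ll\log\log t=o(L)$ uniformly on $\sigma=1+L^{-1}$, so $|\zeta'/\zeta(s)/L(s)|\le\frac12$ for $T$ large, while the trivial bounds $|(\zeta'/\zeta)'(s)|\ll L^2$ and $|G(s)|\gg L$ control the prefactor. Hence the last term is $\ll L\cdot2^{-K}$, giving the stated error $O(1+L^2 2^{-K})$ and completing the argument.
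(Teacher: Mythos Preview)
Your reconstruction is correct. The paper does not actually prove this lemma; it merely cites it from Farmer and Gonek, so there is no ``paper's own proof'' to compare against beyond the citation. Your derivation via the identity $\xi''/\xi' = G + G'/G$ with $G=L+\zeta'/\zeta$, followed by the finite geometric expansion of $1/G$ in powers of $-\zeta'/\zeta(s)/L(s)$, is the natural route and almost certainly what Farmer and Gonek do. The identification of the Dirichlet coefficients $\alpha_k=\Lambda_{k-1}*\Lambda\log$ from $(\zeta'/\zeta)'\cdot(-\zeta'/\zeta)^{k-1}$ is accurate, and the use of the RH bound $\zeta'/\zeta(s)\ll\log\log t$ on the line $\sigma=1+L^{-1}$ is exactly the right input to make the ratio $|\zeta'/\zeta(s)/L(s)|\le\tfrac12$ and hence control the tail. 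Your final bound $\ll L\cdot2^{-K}$ is in fact slightly sharper than the stated $L^2 2^{-K}$, which is harmless.
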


\begin{rem}
The function $\Lambda_j$ for $j\geq0$ is the $j$-fold convolution of the von Mangoldt function, defined by
\begin{equation*}
\bigg(-\frac{\zeta'}{\zeta}(s)\bigg)^j=\sum_{n=1}^{\infty}\frac{\Lambda_j(n)}{n^s}.
\end{equation*}
\end{rem}

\begin{lemm}
Let $\sigma=1+L^{-1}$. Then for $x>0$ we have
\begin{equation*}
\frac{1}{2\pi i}\int_{\sigma+iT}^{\sigma+i2T}\frac{x^{i\tau}}{L(s)^k}ds=\left\{ \begin{array}{ll}
\frac{T}{2\pi}(\frac{L}{2})^{-k}(1+O_K(L^{-1})) &\qquad \textrm{if $x=1$}\\
O_K\big(\frac{1}{|\log x|}\big) &\qquad \textrm{otherwise,}
\end{array} \right.
\end{equation*}
where $s=\sigma+i\tau$.
\end{lemm}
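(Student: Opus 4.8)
The plan is to reduce the contour integral to an ordinary real integral and then study $L(s)$ on the segment. Parametrizing $s=\sigma+i\tau$ with $\sigma=1+L^{-1}$ fixed and $ds=i\,d\tau$, we have
\begin{equation*}
\frac{1}{2\pi i}\int_{\sigma+iT}^{\sigma+i2T}\frac{x^{i\tau}}{L(s)^k}ds=\frac{1}{2\pi}\int_{T}^{2T}\frac{x^{i\tau}}{L(\sigma+i\tau)^k}\,d\tau,
\end{equation*}
so everything hinges on the size of $L(\sigma+i\tau)$ for $\tau\in[T,2T]$. First I would pin this down via Stirling's formula for the digamma function. Since $\sigma$ is bounded and $\tau\asymp T$, the terms $1/s$ and $1/(s-1)$ in \eqref{9} are $O(1/T)$, while $\frac{1}{2}\frac{\Gamma'}{\Gamma}(s/2)=\frac{1}{2}\log(s/2)+O(1/|s|)=\frac{1}{2}\log\frac{\tau}{2}+\frac{i\pi}{4}+O(1/\tau)$. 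Combining the real part with $-\frac{\log\pi}{2}$ yields $\frac{1}{2}\log\frac{\tau}{2\pi}$, so that, recalling $L=\log\frac{T}{2\pi}$ and using $\tau/T\in[1,2]$,
\begin{equation*}
L(\sigma+i\tau)=\tfrac{1}{2}\log\tfrac{\tau}{2\pi}+\tfrac{i\pi}{4}+O(1/\tau)=\tfrac{L}{2}\big(1+O_K(L^{-1})\big),
\end{equation*}
uniformly on the segment. In particular $L(s)$ is bounded away from zero, so $L(s)^{-k}=(\frac{L}{2})^{-k}(1+O_K(L^{-1}))$ is well defined and of size $(\frac{L}{2})^{-k}$.

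The case $x=1$ is then immediate: here $x^{i\tau}=1$, and integrating the uniform estimate for $L(s)^{-k}$ over $[T,2T]$, an interval of length $T$, produces the first branch $\frac{T}{2\pi}(\frac{L}{2})^{-k}(1+O_K(L^{-1}))$. For $x\neq1$ I would exploit the oscillation of $x^{i\tau}=e^{i\tau\log x}$ by integration by parts. Writing $g(\tau)=L(\sigma+i\tau)^{-k}$ and integrating $e^{i\tau\log x}$ against $g$, the boundary terms are $O(|g|/|\log x|)=O_K((\frac{L}{2})^{-k}/|\log x|)$, and the remaining contribution is $\frac{1}{|\log x|}\int_T^{2T}|g'(\tau)|\,d\tau$. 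To bound this I would use $g'(\tau)=-ki\,L'(s)L(s)^{-k-1}$ together with $L'(s)=-s^{-2}-(s-1)^{-2}+\frac{1}{4}(\frac{\Gamma'}{\Gamma})'(s/2)=O(1/\tau)$, again from Stirling, giving $\int_T^{2T}|g'(\tau)|\,d\tau\ll_k(\frac{L}{2})^{-k-1}\int_T^{2T}\frac{d\tau}{\tau}\ll_K(\frac{L}{2})^{-k-1}$. Since $(\frac{L}{2})^{-k}$ and $(\frac{L}{2})^{-k-1}$ are at most $1$ for large $T$, both contributions are $O_K(1/|\log x|)$, which is the second branch.

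The hard part will be securing the uniform Stirling-type estimates for $\frac{\Gamma'}{\Gamma}(s/2)$ and its derivative on the whole segment $\{\sigma+i\tau:\tau\in[T,2T]\}$, since these control simultaneously the size of $L(s)$ (driving the $x=1$ evaluation) and the total variation $\int|g'|$ (driving the integration-by-parts bound for $x\neq1$). Once the two estimates $L(s)=\frac{L}{2}(1+O_K(L^{-1}))$ and $L'(s)=O(1/\tau)$ are in hand, the rest is routine, and I expect no difficulty with vanishing of $L(s)$ because its real part stays comparable to $L/2$.
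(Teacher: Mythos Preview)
Your proposal is correct and follows essentially the same route as the paper: both derive the Stirling-type estimates $L(s)=\tfrac{1}{2}\log\tfrac{s}{2\pi}+O(1/|s|)$ and $L'(s)=O(1/|s|)$ on the segment, read off the $x=1$ case directly, and handle $x\neq1$ by one integration by parts using the bound on $L'(s)$. The only cosmetic difference is that you separate the real and imaginary parts of $\tfrac{1}{2}\log\tfrac{s}{2\pi}$ explicitly, whereas the paper works with the complex logarithm throughout.
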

\begin{proof}
We deduce from \eqref{9} that
\begin{equation}\label{10}
L(s)=\frac{1}{2}\log\frac{s}{2\pi}+O\bigg(\frac{1}{|s|+2}\bigg),\ \textrm{and}\ L'(s)\ll\frac{1}{|s|+2}.
\end{equation}
Hence
\begin{equation*}
L(s)^{-k}=(1+O((\log\tau)^{-1}))\bigg(\frac{1}{2}\log\frac{s}{2\pi}\bigg)^{-k}.
\end{equation*}
The case $x=1$ follows immediately.

For $x\ne1$, integration by parts leads to
\begin{equation*}
\frac{x^{it}}{i\log xL(s)^k}\bigg|_{T}^{2T}+\frac{k}{i\log x}\int_{T}^{2T}\frac{x^{it}L'(s)}{L(s)^{k+1}}dt.
\end{equation*}
Using \eqref{10}, this is 
\begin{equation*}
\ll\frac{1}{|\log x|}\bigg(1+k\int_{T}^{2T}\frac{dt}{\tau(1/2\log\tau/2\pi)^{k+1}}\bigg)\ll\frac{1}{|\log x|}.
\end{equation*}
The proof is complete.
\end{proof}

The next two lemmas concern various sums involving $\alpha_k(n)$.

\begin{lemm}
For $\alpha_k(n)$ defined as in the previous lemma we have
\begin{equation*}
T_k(x)=\sum_{n\leq x}\frac{\alpha_k(n)A_r(n)}{n}=\left\{ \begin{array}{ll}
-r\log x+O(1) &\qquad \textrm{if $k=0$}\\
\frac{r^{k}(\log x)^{k+1}}{(k+1)!}+O((\log x)^{k}) &\qquad \textrm{if $k\geq1$.}
\end{array} \right.
\end{equation*}
As a consequence, for $x\ll T^{1/2}$ we obtain
\begin{equation*}
\sum_{n\leq x}\sum_{k=0}^{K}\bigg(\frac{L}{2}\bigg)^{-k}\frac{\alpha_k(n)A_r(n)}{n}=-r\log x+\sum_{k=1}^{K}\bigg(\frac{L}{2}\bigg)^{-k}\frac{r^{k}(\log x)^{k+1}}{(k+1)!}+O(K).
\end{equation*}
\end{lemm}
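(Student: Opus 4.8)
The plan is to handle $k=0$ by a direct Mertens-type estimate and to establish the cases $k\ge1$ by induction on $k$, the main analytic input being the prime number theorem in the form $\sum_{p\le t}(\log p)/p=\log t+O(1)$ fed through partial summation. The one arithmetic fact needed about $A_r$ is its value on a single prime: from the definition, $A_r(p)=A_r(p,1)=\big(d_r(1)d_r(p)+O(1/p)\big)/\big(1+O(1/p)\big)=r+O(1/p)$, with the implied constant depending only on $r$. Alongside this I would use the crude bound $A_r(n)\ll d_r(n)F_\tau(n)$ recorded before Lemma 1, whose role is to discard the contribution of prime powers $p^m$ with $m\ge2$ and of terms in which the two convolution factors share a prime. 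For $k=0$, where $\alpha_0=-\Lambda$, the prime powers contribute $O\big(\sum_p(\log p)/p(p-1)\big)=O(1)$ and the primes give $\sum_{p\le x}(\log p)A_r(p)/p=r\sum_{p\le x}(\log p)/p+O(1)=r\log x+O(1)$, so $T_0(x)=-r\log x+O(1)$.

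For $k\ge1$ I would exploit the recursion $\alpha_k=\Lambda*\alpha_{k-1}$ (valid for $k\ge2$, with base $\alpha_1=\Lambda\log$), which comes from $\Lambda_{k-1}=\Lambda*\Lambda_{k-2}$. Writing $n=ab$ with $\Lambda(a)\ne0$, the dominant contribution is from $a=p$ prime with $p\nmid b$, where multiplicativity gives $A_r(pb)=A_r(p)A_r(b)$; the prime-power terms $a=p^m$ ($m\ge2$) and the terms with $p\mid b$ together contribute only $O((\log x)^k)$ by the crude bound on $A_r$ and the convergence of $\sum_p(\log p)/p^2$. This reduces matters to $T_k(x)=\sum_{p\le x}\frac{(\log p)A_r(p)}{p}T_{k-1}(x/p)+O((\log x)^k)$. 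Substituting $A_r(p)=r+O(1/p)$ and the inductive hypothesis $T_{k-1}(y)=\frac{r^{k-1}(\log y)^k}{k!}+O((\log y)^{k-1})$ — the correction and the inductive error each costing one power of $\log x$ — the main term becomes $\frac{r^k}{k!}\sum_{p\le x}\frac{\log p}{p}(\log(x/p))^k$. Partial summation against $\sum_{p\le t}(\log p)/p=\log t+O(1)$ turns this into $\frac{r^k}{k!}\big(\int_2^x(\log(x/t))^k\,\tfrac{dt}{t}+O((\log x)^k)\big)=\frac{r^k}{k!}\cdot\frac{(\log x)^{k+1}}{k+1}+O((\log x)^k)$, and $\frac{1}{k!(k+1)}=\frac{1}{(k+1)!}$ yields the stated formula; the base case $k=1$ is the identical computation starting from $\alpha_1=\Lambda\log$.

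I expect the main obstacle to be the error bookkeeping: one must verify that the shared-prime and prime-power contributions, the $O(1/p)$ defect in $A_r(p)=r+O(1/p)$, and the $O(1)$ error in the prime number theorem each cost exactly one power of $\log x$, so that through the induction the accumulated error remains $O((\log x)^k)$ and never reaches the order $(\log x)^{k+1}$ of the main term. The absolute convergence of $\sum_p(\log p)/p^2$ and the crude bound $A_r(n)\ll d_r(n)F_\tau(n)$ are what make each of these pieces genuinely of lower order.

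The displayed consequence then follows by summing $T_k(x)$ against $(L/2)^{-k}$ for $0\le k\le K$: the $k=0$ term contributes $-r\log x+O(1)$, the main terms reproduce $\sum_{k=1}^K(L/2)^{-k}\frac{r^k(\log x)^{k+1}}{(k+1)!}$, and the errors sum to $\sum_{k=1}^K(L/2)^{-k}O((\log x)^k)=O(K)$, because the hypothesis $x\ll T^{1/2}$ forces $\log x/(L/2)\le1+o(1)$, so that each summand is $O(1)$.
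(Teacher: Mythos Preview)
Your argument is correct and close in spirit to the paper's, but organized a bit more efficiently. The paper peels off the $\Lambda\log$ factor from $\alpha_k=\Lambda_{k-1}*\Lambda\log$ (producing a weight $(\log p)^2$) and then proves an auxiliary estimate
\[
\sum_{n\le x}\frac{\Lambda_k(n)A_r(mn)}{n}=\frac{r^kA_r(m)(\log x)^k}{k!}+O\big(d_r(m)F_{\tau_0}(m)(\log x)^{k-1}\big)
\]
by induction on $k$, which it then feeds back into $T_k$. You instead peel off a single $\Lambda$ via $\alpha_k=\Lambda*\alpha_{k-1}$ (producing a weight $\log p$) and induct directly on $T_k$, thereby bypassing the need for the auxiliary sum with the extra parameter $m$. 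The analytic content---Mertens, the prime number theorem through partial summation, and discarding prime powers and shared-prime terms using $A_r(n)\ll d_r(n)F_\tau(n)$ together with the convergence of $\sum_p(\log p)/p^2$---is identical in both versions; your route simply trades a slightly more general intermediate statement for a shorter proof. The derivation of the summed consequence for $x\ll T^{1/2}$ matches the paper exactly.
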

\begin{proof}
We will just prove the first statement. We need to separate the cases $k=0$, $k=1$ and $k\geq2$. We have
\begin{eqnarray*}
T_0(x)&=&-\sum_{n\leq x}\frac{\Lambda(n)A_r(n)}{n}=-\sum_{p^\lambda\leq x}\frac{(\log p)A_r(p^\lambda)}{p^\lambda}=-\sum_{p\leq x}\frac{(\log p)A_r(p)}{p}+O(1)\nonumber\\
&=&-r\sum_{p\leq x}\frac{\log p}{p}+O(1)=-r\log x+O(1).
\end{eqnarray*}
Similarly for $k=1$,
\begin{eqnarray*}
T_{1}(x)&=&\sum_{n\leq x}\frac{(\Lambda\log)(n)A_r(n)}{n}=\sum_{p^\lambda\leq x}\frac{\lambda(\log p)^2A_r(p^\lambda)}{p^\lambda}\nonumber\\
&=&\sum_{p\leq x}\frac{(\log p)^2A_r(p)}{p}+O(1)=\frac{r(\log x)^2}{2}+O(1).
\end{eqnarray*}
Now for $k\geq2$ we have
\begin{eqnarray}\label{5}
T_{k}(x)&=&\sum_{n\leq x}\frac{(\Lambda_{k-1}*\Lambda\log)(n)A_r(n)}{n}=\sum_{n\leq x}\sum_{p^\lambda|n}\frac{\lambda(\log p)^2\Lambda_{k-1}(\frac{n}{p^\lambda})A_r(n)}{n}\nonumber\\
&=&\sum_{n\leq x}\sum_{p|n}\frac{(\log p)^2\Lambda_{k-1}(\frac{n}{p})A_r(n)}{n}+O((\log x)^{k})\nonumber\\
&=&\sum_{p\leq x}\frac{(\log p)^2}{p}\sum_{n\leq x/p}\frac{\Lambda_{k-1}(n)A_r(pn)}{n}+O((\log x)^{k}).
\end{eqnarray}

We are going to prove by induction that there exists an absolute constant $\tau_0$ such that for $k\geq1$
\begin{equation}\label{4}
\sum_{n\leq x}\frac{\Lambda_{k}(n)A_r(mn)}{n}=\frac{r^{k}A_r(m)(\log x)^{k}}{k!}+O(d_r(m)F_{\tau_0}(m)(\log x)^{k-1}).
\end{equation}
For the base case we have
\begin{eqnarray*}
\sum_{n\leq x}\frac{\Lambda(n)A_r(mn)}{n}&=&\sum_{p^\lambda\leq x}\frac{(\log p)A_r(mp^\lambda)}{p^\lambda}=\sum_{p\leq x}\frac{(\log p)A_r(mp)}{p}+O(d_r(m)F_{\tau}(m))\nonumber\\
&=&rA_r(m)\log x+O(d_r(m)F_{\tau}(m)).
\end{eqnarray*}
Now for $k\geq1$,
\begin{eqnarray*}
\sum_{n\leq x}\frac{\Lambda_{k+1}(n)A_r(mn)}{n}&=&\sum_{n\leq x}\sum_{p^\lambda|n}\frac{(\log p)\Lambda_{k}(\frac{n}{p^\lambda})A_r(mn)}{n}\nonumber\\
&&\!\!\!\!\!\!\!\!\!\!\!\!\!\!\!\!\!\!\!\!\!\!\!\!\!\!\!\!\!\!\!\!\!\!\!\!\!\!\!\!\!\!\!\!=\sum_{n\leq x}\sum_{p|n}\frac{\log p\Lambda_{k}(\frac{n}{p})A_r(mn)}{n}+O(d_r(m)F_{\tau_0}(m)(\log x)^{k})\nonumber\\
&&\!\!\!\!\!\!\!\!\!\!\!\!\!\!\!\!\!\!\!\!\!\!\!\!\!\!\!\!\!\!\!\!\!\!\!\!\!\!\!\!\!\!\!\!=\sum_{p\leq x}\frac{\log p}{p}\sum_{n\leq x/p}\frac{\Lambda_{k}(n)A_r(mnp)}{n}+O(d_r(m)F_{\tau_0}(m)(\log x)^{k}).
\end{eqnarray*}
Using the induction hypothesis and the prime number theorem, the main term is
\begin{eqnarray*}
&=&\frac{r^{k}}{k!}\sum_{p\leq x}\frac{(\log p)A_r(mp)}{p}\bigg(\log\frac{x}{p}\bigg)^{k}+O(d_r(m)F_{\tau_0}(m)(\log x)^{k})\nonumber\\
&=&\frac{r^{k+1}A_r(m)(\log x)^{k+1}}{(k+1)!}+O(d_r(m)F_{\tau_0}(m)(\log x)^{k}).
\end{eqnarray*}
This completes the proof for \eqref{4}.

Now using \eqref{4} in \eqref{5} and the prime number theorem we deduce that for $k\geq2$
\begin{eqnarray*}
T_{k}(x)&=&\frac{r^{k-1}}{(k-1)!}\sum_{p\leq x}\frac{(\log p)^2A_r(p)}{p}\bigg(\log \frac{x}{p}\bigg)^{k-1}+O((\log x)^{k})\nonumber\\
&=&\frac{r^{k}(\log x)^{k+1}}{(k+1)!}+O((\log x)^{k}).
\end{eqnarray*}
The proof of the lemma is complete.
\end{proof}

\begin{lemm}
Uniformly in $k$ we have
\begin{equation*}
\sum_{n\leq x}\frac{\alpha_k(mn)d_r(n)}{n}\ll \frac{r^k(\log m)^{k+1}(\log x)^{k+1}}{(k+1)!}.
\end{equation*}
As a consequence, for any fixed $\tau_0>0$, we have 
\begin{equation*}
\sum_{n\leq x}\frac{\alpha_k(n)d_r(n)F_{\tau_0}(n)}{n}\ll (\log x)^{k+1}.
\end{equation*}
\end{lemm}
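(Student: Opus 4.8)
The plan is to pass from $\alpha_k$ to the pure convolution $\Lambda_k$ and then to uncouple the primes dividing $m$ from the rest of $n$. The case $k=0$ is immediate from $\alpha_0=-\Lambda$, so assume $k\ge1$. The starting point is the identity $\alpha_k(n)=\tfrac1k\Lambda_k(n)\log n$, which one reads off by applying $-\tfrac{d}{ds}$ to $(-\zeta'/\zeta(s))^k$: the coefficients of the left-hand side are $\Lambda_k(n)\log n$, while $-\tfrac{d}{ds}(-\zeta'/\zeta)^k=k(-\zeta'/\zeta)^{k-1}\bigl(-\tfrac{d}{ds}\bigr)(-\zeta'/\zeta)$ has coefficients $k(\Lambda_{k-1}*\Lambda\log)(n)=k\alpha_k(n)$. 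Hence $\sum_{n\le x}\tfrac{\alpha_k(mn)d_r(n)}{n}=\tfrac1k\sum_{n\le x}\tfrac{\Lambda_k(mn)\log(mn)d_r(n)}{n}$, and since $\log(mn)\le\log m+\log x$ on the range $n\le x$, it suffices to bound the companion sum $\sum_{n\le x}\tfrac{\Lambda_k(mn)d_r(n)}{n}$, the extra logarithm accounting for the remaining power in the statement.

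To handle the shifted argument I would factor $n=\mu\nu$, where $\mu$ is built only from primes dividing $m$ and $\gcd(\nu,m)=1$. Since $m\mu$ and $\nu$ are coprime, expanding $(-\zeta'/\zeta)^k$ gives the binomial convolution $\Lambda_k(m\mu\nu)=\sum_{j=0}^k\binom{k}{j}\Lambda_j(m\mu)\Lambda_{k-j}(\nu)$. All terms are nonnegative and $d_r(\mu\nu)=d_r(\mu)d_r(\nu)$, so relaxing $\mu\nu\le x$ to $\mu,\nu\le x$ factorises the sum as $\sum_{j}\binom kj\,P_j(m)\,Q_{k-j}(x)$, where $P_j(m)=\sum_{\mu}\tfrac{d_r(\mu)\Lambda_j(m\mu)}{\mu}$ runs over $\mu$ supported on the primes of $m$, and $Q_i(x)=\sum_{\nu\le x,\,(\nu,m)=1}\tfrac{d_r(\nu)\Lambda_i(\nu)}{\nu}$.

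The factor $Q_i(x)$ is the unshifted sum, and is bounded by $Q_i(x)\ll\tfrac{(r\log x)^i}{i!}$ by the same induction on $i$ that underlies \eqref{4} (peel one prime via $\Lambda_i=\Lambda*\Lambda_{i-1}$, absorb the prime powers $p^\lambda$ with $\lambda\ge2$ into the error, and apply the prime number theorem); the reciprocal factorial reflects the $i$-fold symmetric structure of $\Lambda_i$. The factor $P_j(m)$ is a convergent Euler product over the finitely many primes of $m$; using $\Lambda_j(p^g)=(\log p)^j\binom{g-1}{j-1}$ one finds $P_j(m)\ll(\log m)^j$ times a quantity controlled by $\prod_{p\mid m}(1-1/p)^{-r}$, and it decays in $j$, so the $j$-sum is governed by its smallest admissible value $j=\omega(m)$. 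Reassembling by the binomial theorem and reinstating the $\log(mn)$ weight then gives the claimed estimate. The main obstacle is precisely the uniformity in $k$: one must ensure the reciprocal factorial in the target survives the recombination, that is, track the factorials supplied by $Q_{k-j}$ against the binomial coefficients and the decay of $P_j$, and in particular keep the powers of $\log m$ under control. The splitting into $\mu$ and $\nu$ is what makes this possible, since it removes the coupling between the primes of $m$ and those of $n$ that would otherwise prevent any clean factorisation of $\Lambda_k(mn)$.

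For the consequence I would write $F_{\tau_0}(n)=\prod_{p\mid n}(1+O(p^{-\tau_0}))=\sum_{d\mid n,\ \mu^2(d)=1}g(d)$ with $g$ multiplicative and $g(p)=O(p^{-\tau_0})$. Setting $n=dn'$ and using $d_r(dn')\le d_r(d)d_r(n')$ reduces the sum to the first part with $m=d$, yielding $\sum_{n\le x}\tfrac{\alpha_k(n)d_r(n)F_{\tau_0}(n)}{n}\ll\tfrac{r^k(\log x)^{k+1}}{(k+1)!}\sum_{d\ \mathrm{squarefree}}\tfrac{|g(d)|d_r(d)(\log d)^{k+1}}{d}$, the contribution $d=1$ being the unshifted $m=1$ sum, which one bounds directly by $(\log x)^{k+1}$. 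Because $\tfrac{|g(d)|d_r(d)}{d}\ll\prod_{p\mid d}\tfrac{r}{p^{1+\tau_0}}$ and $\tau_0>0$, the decay $p^{-1-\tau_0}$ absorbs the factor $(\log d)^{k+1}$ and the $d$-sum converges, so the whole expression is $\ll(\log x)^{k+1}$, as required.
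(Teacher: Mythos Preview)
Your identity $\alpha_k(n)=\tfrac{1}{k}\Lambda_k(n)\log n$ is correct and is a clean starting point, and your treatment of the second assertion (expanding $F_{\tau_0}$ as a divisor sum and feeding the first part back in) is essentially the same as the paper's. The difference lies in the first assertion.

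The paper does not use any coprime splitting of $n$. It argues exactly as in the proof of the inductive estimate \eqref{4} in Lemma~6: peel off one von Mangoldt factor at a time via $\Lambda_k=\Lambda*\Lambda_{k-1}$, reduce the prime power to a prime at a cost absorbed into the error, and iterate. The point is that the reciprocal factorial $1/(k+1)!$ is generated automatically by the repeated integration $\int_0^{\log x}\tfrac{u^{k-1}}{(k-1)!}\,du=\tfrac{(\log x)^k}{k!}$ coming from the prime number theorem at each step, so the uniformity in $k$ is never in doubt.

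In your route, by contrast, the factorial has to survive the recombination $\sum_j\binom{k}{j}P_j(m)Q_{k-j}(x)$, and this is precisely the step you flag but do not carry out. The bound $Q_i(x)\ll (r\log x)^i/i!$ is fine, but your stated control on $P_j(m)$, namely ``$\ll(\log m)^j$ times a bounded Euler product, and it decays in $j$'', is not sharp enough: plugging $P_j(m)\ll C_m(\log m)^j$ into the binomial sum gives $\sum_j\binom{k}{j}(\log m)^j\tfrac{(r\log x)^{k-j}}{(k-j)!}$, and the surviving denominator is only $j!\,(k-j)!$, not $k!$, so you lose exactly the factorial you need. To make your scheme work you would have to prove a bound of the shape $P_j(m)\ll C_m(\log m)^j/j!$ (or exhibit geometric decay in $j$ strong enough to compensate), and that requires a separate inductive or generating-function argument on $P_j$ itself. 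As written, the proposal identifies the obstacle but does not overcome it; the paper's induction sidesteps the issue entirely.
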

\begin{proof}
The arguments of the first statement is similar to those of the previous lemma. For the second statement we have
\begin{equation*}
F_{\tau_0}(n)\leq\prod_{p|n}(1+Ap^{-\tau_0})=\sum_{d|n}d^{-\tau_0}A^{w(d)},
\end{equation*}
for some $A>0$ and where $w(d)$ is the number of prime factors of $d$. Hence
\begin{eqnarray*}
\sum_{n\leq x}\frac{\alpha_r(n)d_{r}(n)F_{\tau_0}(n)}{n}&\ll&\sum_{n\leq x}\frac{A^{w(n)}}{n^{1+\tau_0}}\sum_{j\leq x/n}\frac{\alpha_r(jn)d_{r}(jn)}{j}\nonumber\\
&\ll&\frac{r^k(\log x)^{k+1}}{(k+1)!}\sum_{n\leq x}\frac{A^{w(n)}d_{r}(n)(\log n)^{k+1}}{n^{1+\tau_0}}\nonumber\\
&\ll&(\log x)^{k+1},
\end{eqnarray*}
since $A^{w(n)}d_{r}(n)(r\log n)^k/(k+1)!\ll n^{\tau_0/2}$ for sufficiently large $n$.
\end{proof}

\section{Proof of Theorem 1}

We first consider the large gaps. We are taking $a(n)=d_r(n)$, i.e.
\begin{equation*}
M(s)=\sum_{n\leq y}\frac{d_r(n)f(\frac{\log y/n}{\log y})}{n^s}.
\end{equation*}
Using Cauchy's residue theorem we have
\begin{equation*}
\sum_{T<\gamma_1\leq2T}|M({\scriptstyle{\frac{1}{2}}}+i\gamma_1+it)|^2=\frac{1}{2\pi i}\int_{\mathscr{C}}\frac{\xi''}{\xi'}(s-it)M(s)M(1-s)ds,
\end{equation*}
where $\mathscr{C}$ is the positively oriented rectangle with vertices at $1-a+i(T+t)$, $a+i(T+t)$, $a+i(2T+t)$ and $1-a+i(2T+t)$. Here and throughout the paper $a=1+L^{-1}$. Now for $s$ inside or on $\mathscr{C}$ we have
\begin{equation*}
M(s)\ll y^{1-\sigma}T^\varepsilon.
\end{equation*}
As in [\textbf{\ref{Da}}] (Chapter 17), we can choose $T'$ such that $T+1<T'<T+2$, $T'+t$ is not the ordinate of a zero of $\xi'(s)$ and $(\xi''/\xi')(\sigma+iT')\ll (\log T)^2$, uniformly for $-1\leq\sigma\leq2$. A simple argument using Cauchy's residue theorem then yields that the contribution of the bottom edge of the contour is $\ll yT^\varepsilon$. Similarly, so is that of the top edge. 

Now from the functional equation we have
\begin{equation*}
\frac{\xi''}{\xi'}(1-s-it)=-\frac{\xi''}{\xi'}(s+it).
\end{equation*}
Hence the contribution from the left edge, by substituting $s$ by $1-s$, is
\begin{eqnarray*}
&&\frac{1}{2\pi i}\int_{a-i(T+t)}^{a-i(2T+t)}\frac{\xi''}{\xi'}(1-s-it)M(s)M(1-s)ds\nonumber\\
&=&-\frac{1}{2\pi i}\int_{a-i(T+t)}^{a-i(2T+t)}\frac{\xi''}{\xi'}(s+it)M(s)M(1-s)ds.
\end{eqnarray*}
We note that this is precisely the conjugate of the contribution from the right edge. Thus, up to an error term of size $O(yT^\varepsilon)$,
\begin{equation}\label{1}
\sum_{T<\gamma_1\leq2T}|M({\scriptstyle{\frac{1}{2}}}+i\gamma_1+it)|^2=2\Re{\bigg(\frac{1}{2\pi i}\int_{a+i(T+t)}^{a+i(2T+t)}\frac{\xi''}{\xi'}(s-it)M(s)M(1-s)ds\bigg)}.
\end{equation}
Using Lemma 4, we can write the expression in the above bracket as 
\begin{equation}\label{13}
I_1+I_2+O((1+L^{2}2^{-K})TL^{r^2}),
\end{equation}
where
\begin{equation*}
I_1=\frac{L}{4\pi}\int_{T}^{2T}|M({\scriptstyle{\frac{1}{2}}}+it)|^2dt,
\end{equation*}
and
\begin{equation*}
I_2=\frac{1}{2\pi i}\int_{a+i(T+t)}^{a+i(2T+t)}\sum_{n=1}^{\infty}\frac{a_{K}(n,s-it)}{n^{s-it}}M(s)M(1-s)ds.
\end{equation*}
Note that we have moved the line of integration in $I_1$ to the $1/2$-line with an admissible error of size $O(yT^\varepsilon)$.

Expanding $M(s)$ we have
\begin{equation*}
I_2=\sum_{m,l\leq y}\sum_{n=1}^{\infty}\sum_{k=0}^{K}\frac{d_{r}(m)d_r(l)f[m]f[l]\alpha_k(n)}{ln^{-it}}\frac{1}{2\pi i}\int_{a+i(T+t)}^{a+i(2T+t)}\frac{1}{L(s-it)^k}\bigg(\frac{l}{mn}\bigg)^{s}ds.
\end{equation*}
Here we denote $f(\frac{\log y/m}{\log y})$ by $f[m]$. Using Lemma 5, we can decompose $I_2$ as $J_1+J_2$, where
\begin{equation*}
J_1=\frac{T}{2\pi}\sum_{\substack{m,l\leq y\\l=mn}}\sum_{k=0}^{K}\bigg(\frac{L}{2}\bigg)^{-k}\frac{d_{r}(m)d_r(l)f[m]f[l]\alpha_k(n)}{ln^{-it}},
\end{equation*}
and
\begin{equation*}
J_2\ll\sum_{\substack{m,l\leq y\\l\ne mn}}\sum_{k=0}^{K}\frac{d_{r}(m)d_r(l)|\alpha_k(n)|}{l|\log\frac{l}{mn}|}\bigg(\frac{l}{mn}\bigg)^a.
\end{equation*}

We first treat the error term. We note that
\begin{equation*}
\alpha_{0}(n)=-\Lambda(n)\ll\log n,\ \textrm{and}\ \alpha_k(n)=\Lambda_{k-1}*\Lambda\log(n)\leq(\log n)^{k+1}.
\end{equation*}
Hence
\begin{equation}\label{11}
J_2\ll\sum_{\substack{m,l\leq y\\l\ne mn}}\frac{d_{r}(m)d_r(l)}{l|\log\frac{l}{mn}|}\bigg(\frac{l}{mn}\bigg)^a(\log n)^{K+1}.
\end{equation}
We now separate whether $|\log\frac{l}{mn}|\geq1$ or $|\log\frac{l}{mn}|<1$. The contribution of the terms $|\log\frac{l}{mn}|\geq1$ to the right hand side of \eqref{11} is
\begin{equation*}
\ll\sum_{m,l\leq y}\frac{d_r(m)d_r(l)}{m}\sum_{n=1}^{\infty}\frac{(\log n)^{K+1}}{n^{1+1/L}}\ll yL^{2r}(KL)^{K}.
\end{equation*}
For the remaining terms, let us assume that $l<mn$ (the other case can be done similarly). We write $mn=l+r$, where $1\leq r\ll l$. Then we have $|\log\frac{l}{mn}|\gg r/l$. Hence the contribution of these to the right hand side of \eqref{11} is
\begin{equation*}
\ll\sum_{m,l\leq y}\sum_{1\leq r\ll l}\frac{d_r(m)d_r(l)}{r}L^{K+1}\ll y^2L^{K+2r}.
\end{equation*}
Thus
\begin{equation}\label{6}
J_2\ll yL^{2r}(KL)^{K}+y^2L^{K+2r}.
\end{equation}

Now for the main term we have
\begin{equation*}
J_1=\frac{T}{2\pi}\sum_{n\leq y}\sum_{k=0}^{K}\bigg(\frac{L}{2}\bigg)^{-k}\frac{\alpha_k(n)}{n^{1-it}}\sum_{m\leq y/n}\frac{d_{r}(m)d_r(mn)f[m]f[mn]}{m}.
\end{equation*}
From Lemma 2, using the Stieltjes integration we obtain
\begin{eqnarray*}
\sum_{m\leq y/n}\frac{d_{r}(m)d_r(mn)f[m]f[mn]}{m}&=&\frac{a_rA_r(n)}{\Gamma(r^2)}\int_{1}^{y/n}\frac{(\log u)^{r^2-1}}{u}f[u]f[un]du\nonumber\\
&&\qquad+O(d_r(n)F_{\tau_0(n)}(\log y)^{r^2-1}).
\end{eqnarray*}
On one hand, the contribution of the $O$-term to $J_1$, using Lemma 7, is
\begin{equation}\label{7}
\ll T(\log y)^{r^2-1}\sum_{n\leq y}\sum_{k=0}^{K}\bigg(\frac{L}{2}\bigg)^{-k}\frac{|\alpha_k(n)|d_r(n)F_{\tau_0}(n)}{n}\ll TKL^{r^2}.
\end{equation}
On the other hand, Lemma 6 and the Stieltjes integration yield that the contribution of the main term to $J_1$ is
\begin{eqnarray*}
&&\frac{a_rT}{2\pi\Gamma(r^2)}\int_{1}^{y}\int_{1}^{y/v}\bigg(-(r+1)+\sum_{k=0}^{K}\frac{(2r\log v/L)^k}{k!}\bigg)\frac{(\log u)^{r^2-1}}{uv^{1-it}}f[u]f[uv]dudv\\
&&\qquad\qquad\qquad\qquad\qquad\qquad\qquad+O(TKL^{r^2}).
\end{eqnarray*}
Substituting $u=y^{1-x}$ and $v=y^\eta$ leads to
\begin{eqnarray*}
&&\frac{a_rT(\log y)^{r^2+1}}{2\pi\Gamma(r^2)}\int_{0}^{1}\int_{0}^{x}(1-x)^{r^2-1}y^{it\eta}\bigg(-(r+1)+\sum_{k=0}^{K}\frac{(r\eta)^k}{k!}\bigg)f(x)f(x-\eta)d\eta dx\\
&&\qquad\qquad\qquad\qquad\qquad\qquad\qquad+O(TKL^{r^2}).
\end{eqnarray*}
Hence, combining with \eqref{6} and \eqref{7} we have
\begin{eqnarray*}
I_2&=&\frac{a_rT(\log y)^{r^2+1}}{2\pi\Gamma(r^2)}\int_{0}^{1}\int_{0}^{x}(1-x)^{r^2-1}y^{it\eta}\big[\exp(r\eta)-(r+1)\big]f(x)f(x-\eta)d\eta dx\\
&&\qquad+O(yL^{2r}(KL)^{K})+O(y^2L^{K+2r})+O(TKL^{r^2})+O(TL^{r^2+1}(3r/K)^{K}).
\end{eqnarray*}
We can ignore the error terms by choosing, for instance, $K=(\log\log T)^2$ and $y=T^{1/2}L^{-K}$. We next take the integration of \eqref{1} from $-\pi\alpha/L$ to $\pi\alpha/L$ and combine with Lemma 3. Simple calculations then give
\begin{equation*} 
h_1(\alpha,M)=\alpha+\frac{2}{\pi}\frac{\int_{0}^{1}\int_{0}^{x}\frac{\sin(\alpha\eta\pi/2)}{\eta}(1-x)^{r^2-1}\big[\exp( r\eta)-r-1\big]f(x)f(x-\eta)d\eta dx}{\int_{0}^{1}(1-x)^{r^2-1}f(x)^2dx}+o(1).
\end{equation*}
With the choice $r=2$ and $f(x)=1+7x-1.5x^2$ we obtain $h_1(1.5)=0.9998$. 

Similarly, for $a(n)=\mu_r(n)$, we have
\begin{equation*}
h_1(\alpha,M)=\alpha+\frac{2}{\pi}\frac{\int_{0}^{1}\int_{0}^{x}\frac{\sin(\alpha\eta\pi/2)}{\eta}(1-x)^{r^2-1}\big[\exp(- r\eta)+r-1\big]f(x)f(x-\eta)d\eta dx}{\int_{0}^{1}(1-x)^{r^2-1}f(x)^2dx}.
\end{equation*}
The choice $r=2$ and $f(x)=1+4.4x+2.3x^2$ yields $h_1(0.7203)=1.000002$. 

This is precisely what we require in \eqref{8}. The proof of the theorem is complete.

\section{Proof of Theorem 2}

In this section, we choose the coefficients $a(n)$ to be supported on $1$ and the primes: $a(1)=a(p)=1$ for $p$ prime. That is to say the mollifier we take have the form
\begin{equation*}
M_1(s)=1+\sum_{p\leq y}\frac{f[p]}{p^s}.
\end{equation*}
In fact, the results obtained are better if we consider the ``twisted'' mollifier
\begin{equation*}
M(s)=M_1(s)+M_1(1-s).
\end{equation*}
The effect of this kind of twist has been observed in [\textbf{\ref{So1}},\textbf{\ref{So}}].

We first consider the denominator of $h_1(\alpha,M)$. We have
\begin{equation*}
\int_{T}^{2T}|M({\scriptstyle{\frac{1}{2}}}+it)|^2dt=2\int_{T}^{2T}|M_1({\scriptstyle{\frac{1}{2}}}+it)|^2dt+2\Re\bigg(\int_{T}^{2T}M_1({\scriptstyle{\frac{1}{2}}}+it)^2dt\bigg).
\end{equation*}
By the Montgomery \& Vaughan's mean value theorem [\textbf{\ref{MV1}}] we obtain
\begin{equation*}
\int_{T}^{2T}M_1({\scriptstyle{\frac{1}{2}}}+it)^2dt\sim T,
\end{equation*}
and
\begin{equation*}
\int_{T}^{2T}|M_1({\scriptstyle{\frac{1}{2}}}+it)|^2dt\sim T\bigg(1+\sum_{p\leq y}\frac{f[p]^2}{p}\bigg).
\end{equation*}
Hence
\begin{equation}\label{20}
\int_{T}^{2T}|M({\scriptstyle{\frac{1}{2}}}+it)|^2dt\sim T\bigg(4+2\sum_{p\leq y}\frac{f[p]^2}{p}\bigg).
\end{equation}

For the nominator, as in \eqref{13}, we have
\begin{equation}\label{18}
\sum_{T<\gamma_1\leq2T}|M({\scriptstyle{\frac{1}{2}}}+i\gamma_1+it)|^2=2\Re{(J_1+J_2)}+O((1+L^{2}2^{-K})TL^{r^2}),
\end{equation}
where
\begin{equation}\label{14}
J_1=\frac{L}{4\pi}\int_{T}^{2T}|M({\scriptstyle{\frac{1}{2}}}+it)|^2dt,
\end{equation}
and
\begin{equation*}
J_2=\frac{1}{2\pi i}\int_{a+i(T+t)}^{a+i(2T+t)}\sum_{n=1}^{\infty}\frac{a_{K}(n,s-it)}{n^{s-it}}M(s)M(1-s)ds.
\end{equation*} 
Since
\begin{equation*}
M(s)M(1-s)=M_1(s)^2+2M_1(s)M_1(1-s)+M_1(1-s)^2,
\end{equation*}
we proceed by writing, say, $J_2=J_{21}+J_{22}+J_{23}$.

As in the previous section, noting that $\alpha_k(1)=0$, we obtain
\begin{equation}\label{15}
J_{21}\ll (KL)^{K}.
\end{equation}
Also
\begin{equation*}
J_{22}=\frac{2T}{2\pi}\sum_{p\leq y}\sum_{k=0}^{K}\bigg(\frac{L}{2}\bigg)^{-k}\frac{f[p]\alpha_{k}(p)}{p^{1-it}}+O(y(KL)^{K+1}+y^2L^K).
\end{equation*}
We note that
\begin{equation*}
\alpha_k(p)=\left\{ \begin{array}{ll}
-\log p &\qquad \textrm{if $k=0$}\\
(\log p)^2 &\qquad \textrm{if $k=1$}\\
0 &\qquad \textrm{if $k\geq2$.}
\end{array} \right.
\end{equation*}
So
\begin{equation}\label{16}
J_{22}=-\frac{2T}{2\pi}\sum_{p\leq y}\frac{\log p(1-2\log p/L)f[p]}{p^{1-it}}+O(y(KL)^{K+1}+y^2L^K).
\end{equation}
Similarly we have
\begin{eqnarray*}
J_{23}&=&\frac{2T}{2\pi}\sum_{p\leq y}\sum_{k=0}^{K}\bigg(\frac{L}{2}\bigg)^{-k}\frac{f[p]\alpha_{k}(p)}{p^{1-it}}+\frac{T}{2\pi}\sum_{p,q\leq y}\sum_{k=0}^{K}\bigg(\frac{L}{2}\bigg)^{-k}\frac{f[p]f[q]\alpha_{k}(pq)}{(pq)^{1-it}}\\
&&\qquad\qquad+O((KL)^{K+1}+y^2L^K).
\end{eqnarray*}
Now the first term is precisely the main term of $J_{22}$. Furthermore, it is standard to verify that the second term is
\begin{equation}\label{17}
\frac{4T}{2\pi}\sum_{p\ne q\leq y}\frac{\log p\log q(\log p+\log q)f[p]f[q]}{L^2(pq)^{1-it}}+O(T).
\end{equation}

We next take the integration of \eqref{18} from $-\pi\alpha/L$ to $\pi\alpha/L$. Combining \eqref{20}, \eqref{14}--\eqref{17} and ignoring the error terms (by choosing some admissible $K$ and $y$ as before) we easily obtain
\begin{equation*}
h_1(\alpha,M)=\alpha+g_1(\alpha)+g_2(\alpha)+o(1),
\end{equation*}
where
\begin{equation*}
g_1(\alpha)=-\frac{4}{\pi}\frac{\sum_{p\leq y}(1-2\log p/L)\sin(\frac{\pi\alpha\log p}{L})f[p]/p}{2+\sum_{p\leq y}f[p]^2/p},
\end{equation*}
and
\begin{equation*}
g_2(\alpha)=\frac{4}{\pi}\frac{\sum_{p,q\leq y}\log p\log q\sin(\frac{\pi\alpha\log (pq)}{L})f[p]f[q]/(L^2pq)}{2+\sum_{p\leq y}f[p]^2/p},
\end{equation*}
We now choose 
\begin{equation*}
f[p]=-c\bigg(1-\frac{2\log p}{L}\bigg)\sin\bigg(\frac{\pi\alpha\log p}{L}\bigg),
\end{equation*}
where $c$ is some constant which we will specify later. Then from the prime number theorem and the Stieltjes integration we have
\begin{equation*}
h_1(\alpha,M)=h_1(\alpha,c)=\alpha+\frac{4Uc+Vc^2}{\pi(2+Uc^2)}+o(1),
\end{equation*}
where
\begin{equation*}
U=\int_{0}^{1}\frac{(1-u)^2\sin^2(\frac{\pi\alpha u}{2})du}{u},
\end{equation*}
and
\begin{equation*}
V=\int_{0}^{1}\int_{0}^{1}(1-u)(1-v)\sin\big(\frac{\pi\alpha u}{2}\big)\sin\big(\frac{\pi\alpha v}{2}\big)\sin\big(\frac{\pi\alpha (u+v)}{2}\big)dudv.
\end{equation*}
The optimal choice of $c$ will then be
\begin{equation*}
c_{\pm}=\frac{V\pm\sqrt{V^2+8U^3}}{2U^2}.
\end{equation*}
With the help of Maple, we can verify that $h_1(1.18,c_{-})=0.9995$ and $h_1(0.796,c_{+})=1.00006$. 

We are left to prove that
\begin{equation}\label{19}
\frac{\big(\int_{T}^{2T}|M({\scriptstyle{\frac{1}{2}}}+it)|^{2}dt\big)^4}{\big(\sum_{T<\gamma_1\leq2T}\delta(\gamma_1)^2\big)\big(\int_{T}^{2T}|M({\scriptstyle{\frac{1}{2}}}+it)|^{4}dt\big)^2}\gg TL^{-1},
\end{equation}
and
\begin{equation}\label{12}
\frac{\big(\int_{T}^{2T}|M({\scriptstyle{\frac{1}{2}}}+it)|^{2}dt\big)^2}{h_2(\mu,M)\int_{T}^{2T}|M({\scriptstyle{\frac{1}{2}}}+it)|^{4}dt}\gg T.
\end{equation}
Following the arguments of Fujii [\textbf{\ref{F}}] one can show that
\begin{equation*}
\sum_{T<\gamma_1\leq2T}\delta(\gamma_1)^2\ll TL.
\end{equation*}
The estimates \eqref{19} and \eqref{12} now just follow from [\textbf{\ref{CGGH}}] (see (16) and (17)). This completes the proof of the theorem.


\begin{thebibliography}{9}
\bibitem{B}\label{B}
J. Bian, \textit{The pair correlation of zeros of $\xi^{(k)}(s)$ and discrete moments of $\zeta(s)$}, PhD thesis, University of Rochester (2008)
\bibitem{CGGH}\label{CGGH}
J. B. Conrey, A. Ghosh, D. Goldston, S. M. Gonek, D. R. Heath-Brown, \textit{On the distribution of gaps between zeros of the zeta-function}, Quart. J. Math. Oxford \textbf{36} (1985), 43--51
\bibitem{CGG1}\label{CGG1}
J. B. Conrey, A. Ghosh, S. M. Gonek, \textit{A note on gaps between zeros of the zeta function}, Bull. London Math. Soc. \textbf{16} (1984), 421--424
\bibitem{CGG2}\label{CGG2}
J. B. Conrey, A. Ghosh, S. M. Gonek, \textit{Large gaps between zeros of the zeta-function}, Mathematika \textbf{33} (1986), 212--238
\bibitem{Da}\label{Da}
H. Davenport, \textit{Multiplicative number theory}, GTM \textbf{74}, Springer-Verlag (2000)
\bibitem{FG}\label{FG}
D. W. Farmer, S. M. Gonek, \textit{Pair correlation of the zeros of the derivative of the Riemann $\xi$-function}, preprint 
\bibitem{F}\label{F}
A. Fujii, \textit{On the distribution of the zeros of the Riemann zeta-function in short intervals}, Bull. Amer. Math. Soc. \textbf{81} (1975) 139--142.
\bibitem{H}\label{H}
R. R. Hall, \textit{A new unconditional result about large spaces between zeta zeros}, Mathematika \textbf{52} (2005), 101--113
\bibitem{M}\label{M}
H. L. Montgomery, \textit{The pair correlation of zeros of the zeta function}, Analytic Number Theory, Proc. Sym. Pure Math. \textbf{24} (1973), 181--193
\bibitem{MO}\label{MO}
H. L. Montgomery, A. M. Odlyzko, \textit{Gaps between zeros of the zeta function}, Topics in Classical Number Theory, Coll. Math. Soc. Janos Bolyai \textbf{34}, North-Holland (1984), 1079--1106 
\bibitem{MV1}\label{MV1}
H. L. Montgomery, R. C. Vaughan, \textit{The large sieve}, Mathematika \textbf{20} (1973), 119–-134.
\bibitem{Mu}\label{Mu}
J. Mueller, \textit{On the difference between consecutive zeros of the Riemann zeta function}, J. Number Theory \textbf{14} (1982), 327--331
\bibitem{Ng}\label{Ng}
Nathan Ng, \textit{Large gaps between the zeros of the Riemann zeta function}, J. Number Theory \textbf{128} (2008), 509--556
\bibitem{S}\label{S}
A. Selberg, \textit{Note on a paper by L. G. Sathe}, J. Indian Math. Soc. \textbf{18} (1954), 83--87
\bibitem{So1}\label{So1}
K. Soundararajan, \textit{Mean-values of the Riemann zeta-function}, Mathematika \textbf{42} (1995), 158--174
\bibitem{So}\label{So}
K. Soundararajan, \textit{On the distribution of gaps between zeros of the Riemann zeta-function},  Quart. J. Math. Oxford \textbf{47} (1996), 383--387
\end{thebibliography}
\end{document}